\documentclass[11pt,a4paper]{article}
\usepackage[utf8]{inputenc}
\usepackage[T1]{fontenc}
\usepackage[margin=1in]{geometry}
\usepackage{amsmath,amssymb,amsthm,mathtools}
\usepackage{enumitem}
\usepackage[hidelinks]{hyperref}
\usepackage{booktabs}
\usepackage{microtype}
\usepackage{longtable}
\usepackage{array}
\usepackage{xurl}

\newtheorem{theorem}{Theorem}
\newtheorem{lemma}[theorem]{Lemma}

\newtheorem{conjecture}[theorem]{Conjecture}
\newtheorem{proposition}[theorem]{Proposition}
\newtheorem{corollary}[theorem]{Corollary}
\theoremstyle{definition}

\title{An independent computer-assisted proof of the Chen--Raspaud conjecture for \texorpdfstring{$k=4$}{k=4}}
\author{Marysia Nazarczuk\\
Faculty of Mathematics, Informatics and Mechanics\\
University of Warsaw, Poland}
\date{September 2026}

\begin{document}
\maketitle

\begin{abstract}
We give an independent computer-assisted proof of the $k=4$ case of the
Chen--Raspaud conjecture. We prove that every graph $G$ with
$\operatorname{odd\mbox{-}girth}(G)\ge9$ and
$\operatorname{mad}(G)<9/4$ admits a homomorphism to the Kneser graph
$K(9,4)$. The proof combines a minimal-counterexample argument with a
rooted star replacement, exact finite computations in $K(9,4)$, and a final
charging argument. After all reducible local types are removed, the unique
positive local type is $(3,3,4)$. Its unit excess is transferred through
its $4$-thread to a relative with sufficient negative capacity. The
computer-assisted statements used in the proof are certified by exact C++
bitset verifiers, and a separate Python implementation provides an independent
cross-check; the complete source code and raw certificates accompany the
manuscript.
\end{abstract}

\medskip
\noindent\textbf{Keywords:} graph homomorphism; Kneser graph; fractional coloring; maximum average degree; odd girth; discharging; computer-assisted proof.

\smallskip
\noindent\textbf{2020 Mathematics Subject Classification:} 05C15, 05C20, 05C85.

\section{Introduction}

Graph homomorphisms provide a common language for many variants of graph
coloring; see, for example, Hell and Ne\v{s}et\v{r}il~\cite{HellNesetril2004}.
For integers $a\ge 2b$, the Kneser graph $K(a,b)$ has vertex set
$\binom{[a]}b$, and two vertices are adjacent precisely when the
corresponding $b$-subsets are disjoint. Thus a homomorphism
$G\to K(a,b)$ is exactly an $(a:b)$-coloring of $G$: every vertex receives
$b$ colors from a palette of size $a$, and adjacent vertices receive
disjoint color sets. Multiple colorings in this sense go back at least to
Stahl~\cite{Stahl1976}; they form one of the basic viewpoints on fractional
coloring~\cite{ScheinermanUllman1997}.

Kneser graphs are central objects in algebraic and topological graph theory.
Lov\'asz's proof of Kneser's conjecture determined their chromatic number
\cite{Lovasz1978}. The subfamily
\[
   K(2k+1,k),\qquad k\ge1,
\]
is usually called the family of \emph{odd graphs}; its structure has been
studied from several points of view, including work of Meredith and
Lloyd~\cite{MeredithLloyd1973} and Biggs~\cite{Biggs1979}. The Petersen
graph is $K(5,2)$. Poljak and Tuza determined the odd girth of general
Kneser graphs~\cite{PoljakTuza1987}; in particular,
\[
   \operatorname{odd\mbox{-}girth}(K(2k+1,k))=2k+1.
\]
We include a short self-contained proof of the instance
$\operatorname{odd\mbox{-}girth}(K(9,4))=9$ below.

There is a natural interaction between sparsity and homomorphisms. Recall
that
\[
 \operatorname{mad}(G)=\max_{H\subseteq G}\frac{2|E(H)|}{|V(H)|}
\]
is the maximum average degree. The classical degeneracy argument, already
implicit in Szekeres and Wilf~\cite{SzekeresWilf1968}, shows that a strict
upper bound on $\operatorname{mad}(G)$ yields an ordinary coloring bound.
For homomorphisms to odd graphs, however, sparsity alone cannot suffice:
if $G\to H$, then every odd cycle of $G$ maps to an odd closed walk in $H$,
so
\[
 \operatorname{odd\mbox{-}girth}(G)\ge
 \operatorname{odd\mbox{-}girth}(H).
\]
This makes an odd-girth assumption unavoidable when $H=K(2k+1,k)$.

The same circle of ideas appears in results on planar and structurally
restricted graph classes. Klostermeyer and Zhang proved that planar graphs
of sufficiently large odd girth admit $(2k+1:k)$-colorings
\cite{KlostermeyerZhang2002}; see also the fractional-coloring perspective
of Pirnazar and Ullman~\cite{PirnazarUllman2002}. For the Petersen target,
Dvo\v{r}\'ak, \v{S}krekovski, and Valla proved that every planar graph of
odd girth at least $9$ maps to the Petersen graph
\cite{DvorakSkrekovskiValla2008}. Homomorphism bounds for
$K_4$-minor-free graphs provide another setting in which Kneser targets and
odd girth interact particularly cleanly~\cite{BeaudouFoucaudNaserasr2017}.
These results concern special graph classes; the conjecture below instead
asks for a condition expressed only through maximum average degree and odd
girth.

Chen and Raspaud~\cite{ChenRaspaud2010} formulated the following conjecture.

\begin{conjecture}[Chen--Raspaud]\label{conj:CR}
Let $k\ge 2$ and let $G$ be a graph such that
\[
\operatorname{odd\mbox{-}girth}(G)\ge 2k+1
\qquad\text{and}\qquad
\operatorname{mad}(G)<\frac{2k+1}{k}.
\]
Then $G$ admits a homomorphism to $K(2k+1,k)$.
\end{conjecture}

Chen and Raspaud proved the case $k=2$~\cite{ChenRaspaud2010}. The case
$k=3$ was proved by \L{}yczek, Nazarczuk, and Rz\k{a}\.{z}ewski
\cite{LyczekNazarczukRzazewski2024}, using a minimal counterexample,
reducible configurations, and discharging. The present paper studies the
next case.

\begin{theorem}\label{thm:main}
Let $G$ be a graph such that
\[
\operatorname{odd\mbox{-}girth}(G)\ge 9
\qquad\text{and}\qquad
\operatorname{mad}(G)<\frac94.
\]
Then $G$ admits a homomorphism to $K(9,4)$.
\end{theorem}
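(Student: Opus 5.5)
We argue by minimal counterexample. Let $G$ be a counterexample minimizing $|V(G)|+|E(G)|$. Then $G$ is connected and has no vertex of degree at most $1$: a pendant vertex $v$ can always be recolored, since $K(9,4)$ has no isolated vertices, so any coloring of $G-v$ extends. Every proper subgraph of $G$ inherits both hypotheses, so it maps to $K(9,4)$. The plan has two halves. First, a list of \emph{reducible configurations}: local structures around vertices of degree $\ge 3$ and the threads of degree-$2$ vertices joining them, which cannot occur in $G$. Second, a \emph{discharging argument}: give each vertex initial charge $\mu(v)=\deg(v)-\frac94$, whose total is negative because $\operatorname{mad}(G)<9/4$, and redistribute it so that every vertex ends nonnegative, a contradiction.

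The basic tool for reducibility is the \emph{path-extension lemma} in $K(9,4)$. For a thread (path) of length $\ell$ joining precolored endpoints $x$ and $y$, the coloring extends along the thread exactly when the colors $c(x)$ and $c(y)$ are joined by a walk of length $\ell$ in $K(9,4)$. Since $K(9,4)$ is distance-transitive, this depends only on $|c(x)\cap c(y)|$ together with $\ell$. We compute these walk-realizability tables exactly, for instance via powers of the adjacency matrix restricted to each intersection class. This gives the threshold thread lengths: long threads (length around $8$, reflecting odd girth $9$) are always extendable, so they are reducible. Odd girth $\ge 9$ is used to guarantee that a short cycle through a thread never forces a parity obstruction.

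Building on this, we use a \emph{rooted star replacement}. Around a vertex $v$ of degree $d\ge3$ with incident threads of lengths $\ell_1,\dots,\ell_d$ ending at vertices $u_1,\dots,u_d$, we delete the threads and compute, for each partial coloring of the $u_i$, whether some color of $v$ is compatible with all of them. Equivalently, we check that the set $\bigcap_i N_{\ell_i}(c(u_i))$ of compatible colors is nonempty. This is an exact finite computation over the $126$ vertices of $K(9,4)$, which we would certify by bitset enumeration. When the intersection is always nonempty, the star is reducible. To keep $\operatorname{mad}$ and odd girth under control after the reduction, we replace the star by a smaller gadget that encodes the same constraint on the $u_i$. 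We then need to show that the gadget neither raises $\operatorname{mad}$ above $9/4$ nor creates odd cycles shorter than $9$. This is the delicate part, and I would check it by a weight count.

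The charges work as follows. A $3$-vertex has charge $3/4$ and a $2$-vertex has charge $-1/4$. A natural rule is that each vertex of degree $\ge3$ sends charge along its threads to the internal $2$-vertices: $1/8$ from each end, more for long threads. A vertex of degree $d$ with thread lengths $\ell_i$ then ends with $d-\frac94-\frac14\sum_i(\ell_i-1)/2$, and the reducibility list must exclude every type for which this is negative. The main obstacle I expect is the boundary case: after all reducible types are removed, one local type (e.g.\ a $3$-vertex with threads $(3,3,4)$) remains with positive excess and cannot be handled by a purely local rule. For it I would first try a secondary rule that sends the excess through the longest thread to the vertex at its other end. This requires proving, via a second round of computer-checked reducibility on pairs of adjacent-by-thread stars, that this far endpoint always has enough negative slack to absorb it. Finally, summing the final charges gives $\sum\mu\ge0$, contradicting $\operatorname{mad}(G)<9/4$, which proves Theorem~\ref{thm:main}.
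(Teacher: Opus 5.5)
Your overall plan matches the paper's: a minimal counterexample, star reductions certified by exact computation in $K(9,4)$, and discharging in which the lone surviving positive type $(3,3,4)$ pushes its unit along its $4$-thread. Your final charge $d-\frac94-\frac18\sum_i(\ell_i-1)$ is exactly $-\frac18$ times the paper's excess $\varepsilon(v)=\sum_i\ell_i-9d+18$. The gap is in the reduction step, which is the heart of the argument.

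\textbf{The reduction test and the gadget.} Testing $\bigcap_i A_{\ell_i}(c(u_i))\ne\emptyset$ (your $N_{\ell_i}$) over \emph{all} colorings of the relatives fails in general. Already for $(1,2,7)$, which has excess $1$, giving $u_1,u_2$ the same color $X$ yields $A_1(X)\cap A_2(X)=\emptyset$. You also have the role of the gadget backwards. Plain deletion of the star already preserves odd girth and $\operatorname{mad}$, since it gives a subgraph. The gadget is needed precisely to \emph{restrict} the colors the relatives can receive, and you never say what it is or what restriction it imposes.

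The paper deletes $\star(v)$ and joins a root relative $v_r$ to each other relative $v_i$ by a new path of length $a_r+a_i$, which forces $X_i\in A_{a_r+a_i}(X_r)$. For $(1,2,7)$ rooted at the $1$-thread, this gives $|X_1\cap X_2|\le1$. Then at least two of the five colors in $A_1(X_1)$ lie in $A_2(X_2)$, and $F_7(X_3)=\{X_3\}$ removes at most one of them. Odd girth survives because each new path can be rerouted through the two original branches with the same length. $\operatorname{mad}$ survives because a minimum-degree-$2$ subgraph using $k\ge2$ new paths loses $(k-1)a_r$ vertices and as many edges when the star is restored, and since $m>n$ this only raises its average degree.

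\textbf{The minimality measure.} This gadget is \emph{not} smaller: it increases both $|V|$ and $|E|$ by $(d-2)a_r$. So minimality with respect to $|V|+|E|$ says nothing about it. The paper instead minimizes $(n_{\ge3}(G),|E(G)|,|V(G)|)$ lexicographically. It then checks that the replacement removes exactly one vertex of degree at least $3$: the root relative's degree rises by $d-2$ and all other degrees are unchanged.

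\textbf{Absorbing the excess.} Pairwise checks between a $(3,3,4)$-vertex and the far end $u$ of its $4$-thread only show that $u$ can absorb one unit. If $\deg(u)\ge4$, $u$ may receive a unit along each of its $q_4(u)$ incident $4$-threads, so you must control the total intake. This needs the capacity bound $q_4(u)\le-\varepsilon(u)$. The paper proves it by reducing, with the single-star test, the $26$ nonpositive types of degree $4$ to $7$ that violate it, and by a direct count for $d\ge8$. The pair computation is needed only when $\deg(u)=3$, where it shows $u$ has type $(1,1,4)$, $(1,2,4)$ or $(1,3,4)$.
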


\paragraph{Relation to independent work.}
The mathematical research leading to the present proof was carried out through
the end of 2025, and the proof was written up in April 2026. While this
manuscript was being prepared for public release, Choi independently published
a proof of the same $k=4$ case~\cite{Choi2026}. The two arguments share the
broad minimal-counterexample, thread-reduction, and discharging framework, but
their local mechanisms are different. Choi's proof is computer-free and
develops a thread-compatibility and replacement approach that also yields
computer-free proofs for $k=2$ and $k=3$. The present proof instead works
directly with the finite geometry of $K(9,4)$, uses the rooted-star replacement
and exact exhaustive extension checks, and closes by isolating the exceptional
type $(3,3,4)$ and transferring its single unit of excess. The present proof was developed independently and does not rely on any
result or argument from~\cite{Choi2026}.

\paragraph{What is new in the proof.}
The difficulty in passing from $k=3$ to $k=4$ is not the size of the target
alone: $K(9,4)$ has $126$ vertices, but the number of possible local
thread-length patterns around a vertex grows rapidly, and naive pairwise
bookkeeping does not close the final discharging inequalities. The proof
below packages the local extension problem into two exact mechanisms. The
first is a \emph{rooted star replacement}: after one incident thread is
chosen as a root, the center of a star of threads is removed and the
remaining relatives are connected to the root relative by paths whose
lengths preserve the relevant parity and sparsity constraints. The second
is a finite extension test in $K(9,4)$, implemented as an exact set-cover
problem. Together with a simple forbidden-set union bound, these tools
eliminate every positive local type except $(3,3,4)$.

The type $(3,3,4)$ has excess exactly $1$. A second finite computation shows
that if the relative at the other end of its $4$-thread has degree $3$, then
that relative must have type
\[
(1,1,4),\quad (1,2,4),\quad\text{or}\quad(1,3,4),
\]
which provides sufficient negative excess. For higher-degree relatives we
prove a capacity inequality, again reducing only finitely many exceptional
types to the rooted star test. A final one-unit charging rule then gives the
contradiction.

\paragraph{Role of the computer.}
The proof is computer-assisted but not computer-generated. The graph
$K(9,4)$ is constructed directly from its definition, and every
computer-assisted assertion is a finite extension statement explicitly
formulated in the text. The proof-critical certificates are produced by exact C++ bitset programs:
one verifies the rooted-star reductions and a second, independently formulated
program verifies the exceptional $(3,3,4)$ support computation.  These exact
checks use no floating-point arithmetic, random sampling, integer programming,
or heuristic search.  A separate Python/SciPy implementation is retained only
as an independent cross-check and is not needed for the validity of the
computer-assisted lemmas. Appendix~\ref{app:certificates} records the finite
candidate lists and certificates, while Section~\ref{sec:computer} explains
how the verification can be reproduced. The use of a transparent finite
computation is in the same general spirit as many reducibility-and-discharging
arguments; for background on the discharging method see Cranston and
West~\cite{CranstonWest2017}.

\paragraph{Organization.}
Section~\ref{sec:preliminaries} fixes notation and records the geometry of
$K(9,4)$ used later. Section~\ref{sec:minimal-counterexample} introduces a
minimal counterexample and proves the basic reductions, including the rooted
star replacement. Section~\ref{sec:admissible} develops admissible color
sets along threads and the one compatibility computation required for the
exceptional type $(3,3,4)$. Section~\ref{sec:deg3} reduces all positive
local types to this unique exception. Section~\ref{sec:higher-degree}
proves the capacity bounds needed at higher-degree relatives, and
Section~\ref{sec:discharging} completes the proof. Section~\ref{sec:computer}
and the appendices give the exact computational specification and
reproducibility information.

\section{Notation and preliminaries}\label{sec:preliminaries}

\subsection{Graph-theoretic notation}

All graphs considered in this paper are finite and simple. For a graph $G$, we denote by $V(G)$ and $E(G)$ its vertex set and edge set, respectively. For a vertex $v\in V(G)$, its degree is denoted by $\deg_G(v)$, or simply by $\deg(v)$ when the graph is clear from the context.

For a graph $G$, the \emph{average degree} of $G$ is
\[
\operatorname{ad}(G)=\frac{2|E(G)|}{|V(G)|},
\]
and the \emph{maximum average degree} of $G$ is
\[
\operatorname{mad}(G)=\max\{\operatorname{ad}(H): H\subseteq G\}.
\]

We write $\operatorname{odd\mbox{-}girth}(G)$ for the length of a shortest odd cycle in $G$, with the convention that $\operatorname{odd\mbox{-}girth}(G)=\infty$ if $G$ is bipartite.

\medskip

A \emph{walk} in a graph is a sequence $v_0,v_1,\dots,v_\ell$ of vertices such that $v_{i-1}v_i\in E(G)$ for every $i\in\{1,\dots,\ell\}$. Its length is $\ell$. A $u$--$v$ walk is a walk whose first vertex is $u$ and whose last vertex is $v$.

A \emph{thread} in a graph $G$ is a path of length at least $1$ whose internal vertices, if any, all have degree $2$, while both endvertices have degree different from $2$. Thus an edge joining two vertices of degree different from $2$ is a thread of length $1$. If the two endvertices of a thread coincide, we call it a \emph{pinched thread}.

If $u$ and $v$ are the endvertices of a thread, then we say that $u$ and $v$ are \emph{relatives}. For a vertex $v$ of degree at least $3$, we define $\star(v)$ to be the set consisting of $v$ together with all internal vertices of all threads incident with $v$.

Let $v$ be a vertex of degree $k\ge 3$ that does not belong to any pinched thread. Then $v$ belongs to exactly $k$ threads. If the lengths of these threads are
\[
a_1\le a_2\le \dots \le a_k,
\]
then we say that $v$ is an \emph{$(a_1,\dots,a_k)$-vertex}. As usual, a symbol such as $a_i^+$ means that the corresponding thread has length at least $a_i$.

\subsection{The target graph \texorpdfstring{$K(9,4)$}{K(9,4)}}

Our target graph is the Kneser graph $K(9,4)$. We use the convention
\[
V(K(9,4))=\binom{[9]}{4},
\qquad\text{where }[9]=\{0,1,\dots,8\}.
\]
Thus the vertices of $K(9,4)$ are the $4$-subsets of $[9]$, and two such subsets are adjacent if and only if they are disjoint. We refer to the vertices of $K(9,4)$ as \emph{colors}. A homomorphism from a graph $G$ to $K(9,4)$ will be called a \emph{$K(9,4)$-coloring} of $G$.

\begin{lemma}\label{lem:k94-odd-girth}
The odd-girth of $K(9,4)$ is $9$.
\end{lemma}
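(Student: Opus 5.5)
We will prove two things: $K(9,4)$ contains no closed walk of odd length at most $7$, and it does contain a $9$-cycle. The first gives no odd cycles of length $\le 7$, since an odd cycle is an odd closed walk. Together they give odd girth exactly $9$.

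\emph{Lower bound.} The plan is a counting argument on a walk $X_0,X_1,\dots,X_\ell=X_0$ in $K(9,4)$, where $\ell$ is odd. Two disjoint $4$-subsets of $[9]$ cover $8$ of the $9$ elements. So every vertex $X$ has a \emph{complement point} for each neighbour $Y$: the unique element outside $X\cup Y$. Consequently $Y=\overline{X}\setminus\{y\}$, where $\overline{X}=[9]\setminus X$ has size $5$.

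The key step is to track $X_{2i}$ relative to $X_0$. Going two steps, $X_{i+2}$ is disjoint from $X_{i+1}$, and $X_{i+1}$ misses exactly one element of $\overline{X_i}$. Hence $X_{i+2}\subseteq X_i\cup\{y\}$, which gives $|X_{i+2}\cap X_i|\ge 3$. Inducting, $|X_{2j}\cap X_0|\ge 4-j$ and $|X_{2j+1}\cap X_0|\le j$. The second bound holds because $X_1\cap X_0=\emptyset$, and each further double step introduces at most one new element.

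For odd $\ell=2j+1$ with $X_\ell=X_0$, this forces $4=|X_0|\le j$, so $\ell\ge 9$. To make the bound rigorous, define $d_i=|X_i\setminus X_0|$ for even $i$ and $d_i=|X_i\cap X_0|$ for odd $i$. We then show $d_{i+1}\le d_i+1$, with $d_0=0$. The cleanest route is to show that $|X_{i+1}\cap X_0|\le |X_i\setminus X_0|+?$ via the relation $X_{i+1}\subseteq \overline{X_i}$. Indeed $X_{i+1}\cap X_0\subseteq X_0\setminus X_i$, and $|X_0\setminus X_i|=|X_i\setminus X_0|$. This yields the even-to-odd step with no loss.

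The odd-to-even step uses $X_{i+1}\setminus X_0\subseteq \overline{X_i}\setminus X_0$. That set has size $5-(4-|X_i\cap X_0|)$, which is $|X_i\cap X_0|+1$. Combining the two steps gives $d_i\le\lceil i/2\rceil$ up to the correct indexing. Closing the walk at odd $\ell$ needs $d_\ell=|X_0\cap X_0|=4$, so $4\le (\ell-1)/2$, i.e.\ $\ell\ge 9$.

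\emph{Upper bound.} We exhibit an explicit $9$-cycle. Take $X_i=\{4i,4i+1,4i+2,4i+3\}\pmod 9$ for $i=0,\dots,8$. Consecutive sets are disjoint intervals of length $4$ in $\mathbb{Z}_9$. Since $\gcd(4,9)=1$, the nine sets are distinct, and $X_9=X_0$ because $36\equiv0$. So this is a closed walk of length $9$ through $9$ distinct vertices, i.e.\ a $9$-cycle.

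The main obstacle is the bookkeeping in the induction: getting the alternating invariant and its exact $+1$ increments right, so that the bound is $4\le j$ and not off by one. The explicit cycle is routine.
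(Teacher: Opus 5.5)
Your proof is correct, but your lower bound takes a different route from the paper's. The paper double counts on an odd cycle $C$ of length $\ell$. For each $x\in[9]$, the colors on $C$ containing $x$ form an independent set of $C$, so there are at most $(\ell-1)/2$ of them. Counting the $4\ell$ element--color incidences then gives $4\ell\le 9(\ell-1)/2$, i.e.\ $\ell\ge 9$. You instead follow a closed walk and propagate the alternating invariant $d_i$. You show $d_{i+1}\le d_i$ at even-to-odd steps and $d_{i+1}\le d_i+1$ at odd-to-even steps, so $d_i\le\lfloor i/2\rfloor$.

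The paper's route is a two-line computation. Yours is longer but yields more: it shows by hand that $A_{2j+1}(X)\subseteq\{Y:|X\cap Y|\le j\}$ and $A_{2j}(X)\subseteq\{Y:|X\cap Y|\ge 4-j\}$. That is exactly the containment half of Table~\ref{tab:Ai-orbits}, which the paper obtains only by computer. Your argument also applies directly to closed walks. Both arguments extend verbatim to $K(2k+1,k)$.

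Two cosmetic fixes are needed. First, the bound your two steps give is $d_i\le\lfloor i/2\rfloor$, not $\lceil i/2\rceil$; the ceiling would only yield $\ell\ge 7$. The inequality $4\le(\ell-1)/2$ that you actually use is the correct floor bound. Second, the stray ``$+?$'' should be deleted, since the even-to-odd step has no additive term.

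Your cyclic construction $X_i=\{4i,\dots,4i+3\}\bmod 9$ is a valid alternative to the paper's explicit $9$-cycle.
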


\begin{proof}
Let $C$ be an odd cycle of length $\ell$ in $K(9,4)$.  For each element
$x\in[9]$, the vertices of $C$ whose corresponding $4$-sets contain $x$
form an independent set of $C$, since adjacent colors are disjoint.  Hence
$x$ occurs in at most $(\ell-1)/2$ colors on $C$.  Counting incidences
between the $\ell$ colors and the nine elements of $[9]$ gives
\[
4\ell\le 9\frac{\ell-1}{2},
\]
and therefore $\ell\ge9$.

Conversely, the following sequence is a $9$-cycle (indices here are ordinary
set elements, not taken modulo $9$):
\[
\begin{aligned}
&0123,\ 4567,\ 1238,\ 0457,\ 1268,\\
&0345,\ 1678,\ 0234,\ 5678,
\end{aligned}
\]
where, for example, $0123$ denotes $\{0,1,2,3\}$.  Consecutive sets, as
well as the last and the first, are disjoint.  Thus the odd-girth is exactly
$9$.
\end{proof}

For later reference it is useful to record the five orbits of ordered pairs
of colors under the natural action of the symmetric group on $[9]$. Fix a
color $X$. The number of colors $Y$ with $|X\cap Y|=r$ is
\[
\binom{4}{r}\binom{5}{4-r},
\]
so the orbit sizes are as follows.

\begin{table}[htbp]
\centering
\caption{Pair orbits in $K(9,4)$ relative to a fixed color $X$.}
\label{tab:pair-orbits}
\begin{tabular}{c|rrrrr}
\toprule
$r=|X\cap Y|$ & 0 & 1 & 2 & 3 & 4\\
\midrule
number of colors $Y$ & 5 & 40 & 60 & 20 & 1\\
\bottomrule
\end{tabular}
\end{table}

This elementary orbit decomposition is one reason why the local computations
remain manageable: any property of a pair of colors that is invariant under
permutations of $[9]$ depends only on $|X\cap Y|$.

If $X,Y\in V(K(9,4))$, then adjacency means $X\cap Y=\emptyset$. Consequently, the distance structure in $K(9,4)$ is governed by the size of the intersection $|X\cap Y|$. In particular:
\[
|X\cap Y|=0 \iff d(X,Y)=1,
\]
\[
|X\cap Y|=4 \iff X=Y \iff d(X,Y)=0.
\]
For the remaining intersection sizes, the possible walk lengths will be encoded through the admissible sets introduced below.

\medskip

Let $X\in V(K(9,4))$ and let $i\ge 1$ be an integer. We define
\[
A_i(X)=\{Y\in V(K(9,4)):\text{ there exists an $X$--$Y$ walk of length }i\text{ in }K(9,4)\},
\]
and
\[
F_i(X)=V(K(9,4))\setminus A_i(X).
\]
Thus $A_i(X)$ is the set of colors that may appear at distance $i$ from $X$ along a thread, while $F_i(X)$ is the set of forbidden colors.

More generally, if $X_1,\dots,X_t\in V(K(9,4))$ and $\ell_1,\dots,\ell_t$ are positive integers, then
\[
\bigcap_{j=1}^t A_{\ell_j}(X_j)
\]
is the set of colors that may be assigned to a vertex joined to the vertices colored $X_1,\dots,X_t$ by pairwise internally disjoint threads of lengths $\ell_1,\dots,\ell_t$, respectively.

The graph $K(9,4)$ is vertex-transitive, so the cardinalities of the sets $A_i(X)$ and $F_i(X)$ do not depend on the particular choice of $X$. In several places we will also use the fact that, up to automorphisms of $K(9,4)$, the relative position of two colors $X,Y$ is determined by the value of $|X\cap Y|$.

\medskip

A substantial part of the proof reduces to finite verification of local extension properties in $K(9,4)$. For this reason, besides the sets $A_i(X)$ and $F_i(X)$, we will also consider intersections of the form
\[
A_{\ell_1}(X_1)\cap \dots \cap A_{\ell_t}(X_t),
\]
and, in the case of two prescribed relatives, we will study their sizes as functions of the intersection size $|X_1\cap X_2|$. All such data will be either proved directly or computed exhaustively and then used as input in the reducibility arguments in later sections.

We conclude this section with a simple observation that will be used repeatedly.

\begin{lemma}\label{lem:long-thread-extension}
Let $G$ be a graph and let $P$ be a thread in $G$ of length $\ell$. Suppose that $\ell$ is large enough so that for every pair of colors $X,Y\in V(K(9,4))$ there exists an $X$--$Y$ walk of length $\ell$ in $K(9,4)$. Then every $K(9,4)$-coloring of $G-V(P)^\circ$ extends to a $K(9,4)$-coloring of $G$, where $V(P)^\circ$ denotes the set of internal vertices of $P$.
\end{lemma}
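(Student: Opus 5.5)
The plan is to extend the given coloring along the thread directly, using the walk in $K(9,4)$ that the hypothesis supplies. Write $P=v_0v_1\cdots v_\ell$, where $v_0$ and $v_\ell$ are the endvertices and $v_1,\dots,v_{\ell-1}$ are the internal vertices. Let $c$ be a $K(9,4)$-coloring of $G-V(P)^\circ$. Both endvertices are kept in $G-V(P)^\circ$, so they already carry colors $X=c(v_0)$ and $Y=c(v_\ell)$. If $P$ is a pinched thread, then $v_0=v_\ell$ and $X=Y$; the hypothesis also covers pairs with $X=Y$, so this case needs no separate treatment.

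By hypothesis there is an $X$--$Y$ walk $X=W_0,W_1,\dots,W_\ell=Y$ of length $\ell$ in $K(9,4)$. Define the extension $c'$ by $c'(v_i)=W_i$ for $1\le i\le \ell-1$, and $c'=c$ everywhere else. The value $W_0$ agrees with $c(v_0)$ and $W_\ell$ agrees with $c(v_\ell)$, so $c'$ is well defined.

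Next we check that $c'$ is a homomorphism.
\begin{itemize}[nosep]
\item Edges of $G$ with no internal vertex of $P$ as an endpoint lie in $G-V(P)^\circ$, so $c$ already respects them.
\item Internal vertices of a thread have degree $2$, so every edge at an internal vertex is one of the edges $v_{i-1}v_i$ of $P$.
\item Each such edge is mapped to $W_{i-1}W_i$, which is an edge of $K(9,4)$ by the definition of a walk.
\end{itemize}

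One small point needs care: $P$ is a path in $G$ and $G$ is simple, so for $\ell=1$ there are no internal vertices and the statement is trivial. For $\ell\ge2$ the degree-$2$ condition guarantees that no edges leave $P$ at the internal vertices. There is no real obstacle; the only step requiring attention is this bookkeeping showing that every new edge lies on $P$.
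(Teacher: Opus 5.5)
Your proof is correct and is the paper's argument: color the internal vertices of $P$ by the consecutive vertices of an $X$--$Y$ walk of length $\ell$ in $K(9,4)$ supplied by the hypothesis. Your extra checks, that internal vertices have degree $2$ so all new edges lie on $P$, and that the pinched case is covered, spell out what the paper leaves implicit.
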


\begin{proof}
Let $u$ and $v$ be the endvertices of $P$, and let $\varphi$ be a $K(9,4)$-coloring of $G-V(P)^\circ$. By assumption, there exists a $\varphi(u)$--$\varphi(v)$ walk of length $\ell$ in $K(9,4)$. Coloring the consecutive internal vertices of $P$ with the consecutive vertices of this walk yields an extension of $\varphi$ to all of $G$.
\end{proof}

\section{The minimal counterexample and basic reductions}\label{sec:minimal-counterexample}

Suppose, for contradiction, that Theorem~\ref{thm:main} is false.  Among all
counterexamples, choose $G$ lexicographically minimizing
\[
\bigl(n_{\ge3}(G),\ |E(G)|,\ |V(G)|\bigr),
\]
where $n_{\ge3}(G)$ is the number of vertices of degree at least $3$.
This choice is convenient because the replacement graphs used below may
contain more degree-$2$ vertices than $G$, but always contain fewer vertices
of degree at least $3$. The final tie-breaker by $|V(G)|$ is used only for
harmless deletions such as isolated vertices.

\begin{lemma}\label{lem:connected}
The graph $G$ is connected and has minimum degree at least $2$.
\end{lemma}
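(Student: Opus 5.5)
We argue by contradiction against the lexicographic minimality of $G$ with respect to $\bigl(n_{\ge3}(G),|E(G)|,|V(G)|\bigr)$, and handle the two assertions separately. Throughout we use that both hypotheses of Theorem~\ref{thm:main} pass to subgraphs. A subgraph of $G$ has no shorter odd cycle than $G$, so its odd-girth is at least $9$. Since $\operatorname{mad}$ is a maximum over subgraphs, $\operatorname{mad}$ of a subgraph is at most $\operatorname{mad}(G)<9/4$. Hence every proper subgraph of $G$ satisfies the hypotheses.

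\emph{Connectivity.} Suppose $G$ is disconnected, and let $G_1$ be one component and $G_2=G-V(G_1)$. Each $G_i$ is a subgraph of $G$, so it satisfies the hypotheses. Moreover, each $G_i$ is lexicographically smaller than $G$. Indeed, $n_{\ge3}(G_i)\le n_{\ge3}(G)$ and $|E(G_i)|\le|E(G)|$, because degrees within a component are unchanged. Also $|V(G_i)|<|V(G)|$, since both parts are nonempty. Thus each $G_i$ admits a $K(9,4)$-coloring, and the union of these colorings is a $K(9,4)$-coloring of $G$, a contradiction.

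\emph{Minimum degree.}
\begin{itemize}
\item If $G$ has an isolated vertex $v$, then $G-v$ has the same $n_{\ge3}$ and edge count and fewer vertices. So $G-v$ is colorable, and we give $v$ any color.
\item Now let $\deg(v)=1$ with neighbour $u$, and consider $G'=G-v$. Here $n_{\ge3}(G')\le n_{\ge3}(G)$, since only the degree of $u$ drops, and $|E(G')|<|E(G)|$. So $G'$ is strictly smaller and satisfies the hypotheses, hence has a coloring $\varphi$. We extend it by giving $v$ any $4$-subset disjoint from $\varphi(u)$. Such a subset exists because $|[9]\setminus\varphi(u)|=5\ge4$; equivalently, $K(9,4)$ has no isolated vertices, cf.\ Table~\ref{tab:pair-orbits}. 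This contradicts the choice of $G$.
\end{itemize}

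There is no real obstacle here. The only point needing care is the bookkeeping: the deletions never increase $n_{\ge3}$ and strictly decrease the later coordinates of the minimization order. Once that is checked, the argument is routine.
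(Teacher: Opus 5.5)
Your proof is correct and follows essentially the same route as the paper: a disconnected $G$ splits into lexicographically smaller pieces satisfying the hypotheses, and an isolated or degree-$1$ vertex is deleted and the coloring extended, using that every color of $K(9,4)$ has a neighbour. Your explicit check that each deletion never increases $n_{\ge3}$ and strictly decreases $(|E|,|V|)$ in the minimization order spells out what the paper's terser proof leaves implicit.
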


\begin{proof}
If $G$ were disconnected, a non-$K(9,4)$-colorable component would itself be
a smaller counterexample.  An isolated vertex can be colored arbitrarily,
and a vertex of degree $1$ can be colored after its unique neighbor because
every color of $K(9,4)$ has a neighbor.  Deleting such a vertex and extending
a coloring contradicts minimality.
\end{proof}

\begin{lemma}\label{lem:not-cycle}
The graph $G$ is not a cycle.  Consequently, every degree-$2$ vertex of $G$
belongs to a unique thread whose endvertices have degree at least $3$.
\end{lemma}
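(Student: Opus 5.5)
We argue by contradiction: suppose $G$ is a cycle, and show that it then admits a $K(9,4)$-coloring, which is impossible for a counterexample. By Lemma~\ref{lem:connected}, $G$ is connected with minimum degree at least $2$; if it is a cycle $C_\ell$, we split according to the parity of $\ell$.

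If $\ell$ is even, $G$ is bipartite. Any edge $XY$ of $K(9,4)$ (for instance $X=0123$, $Y=4567$) gives a homomorphism by sending the two colour classes to $X$ and $Y$. If $\ell$ is odd, the hypothesis $\operatorname{odd\mbox{-}girth}(G)\ge 9$ forces $\ell\ge 9$. For $\ell=9$ we map $C_9$ onto the explicit $9$-cycle exhibited in the proof of Lemma~\ref{lem:k94-odd-girth}. For odd $\ell>9$ we compose the standard homomorphism $C_\ell\to C_9$ with that embedding. The homomorphism $C_\ell\to C_9$ goes around once and folds the remaining $\ell-9$ edges, an even number, back and forth along a single edge. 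In every case $G$ is $K(9,4)$-colorable, a contradiction. The only point needing care is this odd-cycle step, and it reduces to the explicit $9$-cycle already verified.

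For the consequence, we use that $G$ is connected, has minimum degree at least $2$, and is not a cycle. Hence $G$ is not $2$-regular, so it has a vertex of degree at least $3$. Let $v$ be a degree-$2$ vertex, and walk from $v$ in both directions through degree-$2$ vertices for as long as possible. This process cannot cycle back through $v$ using only degree-$2$ vertices. If it did, those vertices would form a cycle component (each has both its edges inside the cycle), contradicting connectivity together with the existence of a vertex of degree at least $3$. By finiteness, each direction therefore stops at a vertex of degree at least $3$, since degree $1$ is excluded. The resulting maximal path is the required thread; the endpoints may coincide, giving a pinched thread. It is unique because each degree-$2$ vertex has exactly two neighbours, so the maximal extension is forced.
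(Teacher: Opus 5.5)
Your proof is correct and follows essentially the same route as the paper: an even cycle maps to an edge of $K(9,4)$, and an odd cycle of length $\ell\ge 9$ maps onto the explicit $9$-cycle by going around once and absorbing the remaining even number of edges as backtracking pairs. For the consequence, the paper simply cites connectedness and the definition of a thread; your forced-walk argument is a correct expansion of that remark.
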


\begin{proof}
An even cycle maps to an edge of $K(9,4)$.  If $G=C_\ell$ is an odd cycle,
then $\ell\ge 9$.  Since $K(9,4)$ contains a $9$-cycle, a closed walk of every
odd length at least $9$ is obtained by traversing such a cycle and inserting
backtracking pairs of edges.  Hence $C_\ell$ also maps to $K(9,4)$, a
contradiction.  The second assertion follows from connectedness and the
definition of a thread.
\end{proof}

\begin{lemma}\label{lem:no-long-threads}
The graph $G$ contains no thread of length at least $8$.
\end{lemma}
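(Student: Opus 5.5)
The plan is to apply Lemma~\ref{lem:long-thread-extension} with $\ell\ge 8$, and then use minimality of $G$ to produce the coloring of $G-V(P)^\circ$ that gets extended. The main work is therefore a finite fact about $K(9,4)$: for every pair of colors $X,Y$ and every $\ell\ge 8$, there is an $X$--$Y$ walk of length $\ell$. By the pair-orbit remark after Table~\ref{tab:pair-orbits}, it suffices to treat one pair for each value $r=|X\cap Y|\in\{0,1,2,3,4\}$. Appending backtracking pairs of edges lengthens a walk by $2$. So it is enough to find, for each $r$, one even walk and one odd walk, both of length at most $9$.

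I will aim for the explicit bounds below.
\begin{itemize}[nosep]
\item The shortest even $X$--$Y$ walk has length $2(4-r)$.
\item The shortest odd $X$--$Y$ walk has length $2r+1$.
\end{itemize}
Both quantities are at most $9$, namely $8$ at $r=0$ and $9$ at $r=4$.

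The even walks come from swaps. If $|X\cap Z|=3$, then $|X\cup Z|=5$, so the complement of $X\cup Z$ is a $4$-set $W$ disjoint from both. Hence $X,W,Z$ is a walk of length $2$, and each such step changes exactly one element. Starting from $X$ and performing $4-r$ swaps turns $X$ into $Y$, which gives an even walk of length $2(4-r)$.

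The odd walks use one genuine Kneser edge together with swaps. Take a neighbor $X'$ of $X$ inside $[9]\setminus X$ that contains $Y\setminus X$; this is possible since $|Y\setminus X|=4-r\le 4$. Then $|X'\cap Y|=4-r$, so $r$ further swaps lead from $X'$ to $Y$, for a total length of $1+2r$. For $r=4$ this yields a closed walk of length $9$, in accordance with Lemma~\ref{lem:k94-odd-girth}. Alternatively, this whole table can be confirmed by a trivial exhaustive computation over the $5$ orbit representatives.

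Next I handle the reduction itself. Suppose $P$ is a thread of length $\ell\ge 8$ with endvertices $u,v$. By Lemma~\ref{lem:not-cycle}, $u$ and $v$ have degree at least $3$.

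Let $G'=G-V(P)^\circ$, which is a proper subgraph of $G$. Then $\operatorname{mad}(G')\le\operatorname{mad}(G)<9/4$, and odd cycles of $G'$ are odd cycles of $G$, so the odd-girth of $G'$ is still at least $9$. Deleting vertices only lowers degrees, so $n_{\ge3}(G')\le n_{\ge3}(G)$, while $|E(G')|<|E(G)|$. Hence $G'$ is lexicographically smaller and has a $K(9,4)$-coloring $\varphi$.

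If $u\ne v$, the walk fact and Lemma~\ref{lem:long-thread-extension} extend $\varphi$ to $G$. If $P$ is pinched ($u=v$), we need a closed walk of length $\ell$ at $\varphi(u)$, which is the case $r=4$ of the walk fact. Note that odd $\ell$ forces $\ell\ge 9$ here, and that bound is exactly what the odd-girth hypothesis would also give. In either case $G$ is colorable, contradicting its choice.

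The only step requiring genuine care is the walk-length table, specifically that the worst case is exactly $9$ for odd walks between identical colors and $8$ for even walks between disjoint ones. This is what makes the threshold $8$ correct rather than $9$. The swap construction handles it cleanly, so I expect no real obstacle.
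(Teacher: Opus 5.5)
Your proof is correct. The reduction step is the same as the paper's: delete the interior of $P$, colour the smaller graph by minimality, and extend along a walk of length $\ell$ between the endpoint colours, with the pinched case being just $X=Y$.

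The difference lies in where the walk fact comes from. The paper quotes Corollary~\ref{cor:all-walks-length-at-least-8}. That corollary rests on the exhaustively computed identities $A_8(X)=A_9(X)=V(K(9,4))$ of Proposition~\ref{prop:AiFi-values}, i.e.\ on item (C1) of the computer-assisted part. You instead prove the fact by hand, with $r=|X\cap Y|$:
\begin{itemize}
\item the two-step swap $X,\,[9]\setminus(X\cup Z),\,Z$ for $|X\cap Z|=3$ gives an even $X$--$Y$ walk of length $2(4-r)$;
\item an initial edge to a $4$-subset of $[9]\setminus X$ containing $Y\setminus X$, followed by $r$ swaps, gives an odd walk of length $2r+1$.
\end{itemize}
This makes the lemma computer-free. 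Together with backtracking, it also shows that every intersection class listed in Table~\ref{tab:Ai-orbits} is indeed admissible. The paper's route is shorter only because that computation is needed elsewhere anyway.

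Two small remarks.
\begin{itemize}
\item You prove only the upper bounds $2(4-r)$ and $2r+1$, and only these are used. Your assertions that these are the \emph{shortest} lengths, and that the threshold $8$ is ``exactly'' right, are unproved but irrelevant here. They would follow from $|Z_j\cap Z_{j+2}|\ge 3$ along any walk, which would then reproduce all of Table~\ref{tab:Ai-orbits} by hand.
\item Your inequality $n_{\ge3}(G')\le n_{\ge3}(G)$ is the accurate form of the paper's phrase ``the same number of vertices of degree at least $3$''. That phrase fails when an endvertex of $P$ has degree $3$, though this is harmless for the lexicographic comparison.
\end{itemize}
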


\begin{proof}
Let $P$ be a thread of length $\ell\ge8$.  Delete the internal vertices of
$P$.  The resulting graph has the same number of vertices of degree at least
$3$ as $G$ and fewer edges, so by minimality it has a $K(9,4)$-coloring.
By Corollary~\ref{cor:all-walks-length-at-least-8}, the colors of the two
endvertices can be joined by a walk of length $\ell$ in $K(9,4)$, and hence
the coloring extends along $P$, a contradiction.
\end{proof}

\begin{lemma}\label{lem:no-pinched-threads}
The graph $G$ contains no pinched thread.
\end{lemma}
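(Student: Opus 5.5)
Let $P$ be a pinched thread with endvertex $v$, so $P$ is a cycle through $v$ whose other vertices all have degree $2$. Since $G$ is simple, $P$ has length $\ell\ge3$. I would delete the internal vertices of $P$ and obtain $G'$, colour $G'$ by minimality, and then extend the colouring around $P$. The extension needs a closed walk of length $\ell$ in $K(9,4)$ starting and ending at the colour $X$ of $v$. By vertex-transitivity this depends only on $\ell$, not on $X$.

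First, the reduction step. Deleting the $\ell-1$ internal vertices lowers $\deg(v)$ by $2$ and changes no other degree of a surviving vertex.
\begin{itemize}
\item If $\deg_G(v)\ge5$, then $v$ still has degree at least $3$ in $G'$. So $n_{\ge3}(G')=n_{\ge3}(G)$ and $|E(G')|<|E(G)|$.
\item If $\deg_G(v)\in\{3,4\}$, then $n_{\ge3}$ strictly drops.
\end{itemize}
In both cases $G'$ is lexicographically smaller. The hypotheses pass to $G'$, since $\operatorname{mad}$ is monotone under subgraphs and no new odd cycles are created. Hence $G'$ has a $K(9,4)$-colouring $\varphi$. (The degree of $v$ is at least $3$ by Lemma~\ref{lem:not-cycle}.)

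Second, the extension step, split by parity.
\begin{itemize}
\item If $\ell$ is even, colour the internal vertices of $P$ alternately with a neighbour $Y$ of $X$ and with $X$. This gives the closed walk $X,Y,X,\dots,Y,X$ of length $\ell$.
\item If $\ell$ is odd, then $P$ is an odd cycle of $G$, so $\ell\ge9$. Traverse the $9$-cycle of Lemma~\ref{lem:k94-odd-girth}, relabelled by a permutation of $[9]$ so that it passes through $X$. Then pad with $(\ell-9)/2$ backtracking pairs along an edge.
\end{itemize}
Either way $\varphi$ extends to $G$, contradicting the choice of $G$.

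I expect no real obstacle. The only points needing care are checking that the minimality order actually decreases when $\deg(v)\in\{3,4\}$, and that odd-girth forces $\ell\ge9$ in the odd case.
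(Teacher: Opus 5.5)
Your proof is correct and follows the paper's argument: delete the internal vertices of $P$, colour the rest by minimality, and close up around $v$ with a back-and-forth walk when $\ell$ is even, or with the $9$-cycle padded by backtracking pairs when $\ell$ is odd (so $\ell\ge9$). Your case split on $\deg_G(v)$, showing that $(n_{\ge3},|E|,|V|)$ strictly decreases, is a step the paper leaves implicit, and you handle it correctly.
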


\begin{proof}
Let $P$ be a pinched thread of length $\ell$ based at a vertex $v$.  Delete
its internal vertices and color the remaining graph by minimality.  If
$\ell$ is even, every color of $K(9,4)$ is the first and last vertex of a
closed walk of length $\ell$ (repeat an edge back and forth).  If $\ell$ is
odd, then the cycle represented by $P$ and the odd-girth assumption give
$\ell\ge9$, and a closed walk of length $\ell$ exists as in
Lemma~\ref{lem:not-cycle}.  In either case the coloring extends around $P$,
a contradiction.
\end{proof}

We next record the principal replacement used throughout the proof.  The
version below is stated for arbitrary degree; the degree-$3$ instance is the
replacement that motivated the computation in the original draft.

\begin{lemma}[rooted star replacement]\label{lem:star-replacement}
Let $v$ be a vertex of degree $d\ge3$, with incident thread lengths
$a_1,\dots,a_d$, and let $v_1,\dots,v_d$ be the corresponding relatives,
listed with multiplicity if several threads have the same relative.  Fix an
index $r$.  Form $G^*$ by deleting $\star(v)$ and, for every $i\ne r$,
adding an internally new $v_r$--$v_i$ path of length $a_r+a_i$.  If
$v_i=v_r$, this is interpreted as an internally new cycle of that length
based at $v_r$.  Then
\[
\operatorname{odd\mbox{-}girth}(G^*)\ge9,
\qquad
\operatorname{mad}(G^*)<\frac94.
\]
Moreover, $G^*$ has exactly one fewer vertex of degree at least $3$ than
$G$.
\end{lemma}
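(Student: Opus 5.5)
The plan is to handle the three assertions separately: odd girth via a homomorphism $G^*\to G$, the degree count by bookkeeping at the relatives, and sparsity via the potential $\rho(H)=9|V(H)|-8|E(H)|$. Throughout, $G$ is the minimal counterexample, so by Lemmas~\ref{lem:not-cycle} and~\ref{lem:no-pinched-threads} every relative $v_i$ has degree at least $3$ and $v_i\neq v$.

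\emph{Odd girth.} I would define a homomorphism $\psi\colon G^*\to G$. It is the identity on $G-\star(v)$. On the new $v_r$--$v_i$ path of length $a_r+a_i$, it sends the consecutive vertices to the walk in $G$ that goes from $v_r$ along the $r$-th thread to $v$ and then along the $i$-th thread to $v_i$. This walk has exactly $a_r+a_i$ edges, so $\psi$ preserves adjacency, including when $v_i=v_r$. An odd cycle of length $\ell$ in $G^*$ then maps to an odd closed walk of length $\ell$ in $G$. Such a walk contains an odd cycle of length at most $\ell$, so $\ell\ge9$.

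\emph{Degree count.} The vertex $v$ disappears. All new internal vertices have degree $2$. A relative $v_i$ with $i\ne r$ loses one thread end for each occurrence in the list and gains exactly one new path end for each, so its degree is unchanged. The root $v_r$ loses one thread end (per occurrence as index $r$) and gains $d-1\ge2$ new path ends, so its degree stays at least $3$. Hence $n_{\ge3}$ drops by exactly one. Simplicity is preserved because every new path has length $a_r+a_i\ge2$.

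\emph{Sparsity.} Here $\operatorname{mad}(G^*)<9/4$ is equivalent to $\rho(H^*)>0$ for every nonempty subgraph $H^*\subseteq G^*$. Suppose this fails, and take $H^*$ with $\rho(H^*)\le0$ and the fewest vertices.
\begin{itemize}[nosep]
\item Deleting a vertex of degree $0$ or $1$ changes $\rho$ by $-9$ or $-1$, so a smaller violator would remain. Hence $H^*$ has minimum degree at least $2$.
\item Consequently each new path lies either entirely in $H^*$ or has none of its internal vertices in $H^*$. Let $S$ be the set of indices $i\ne r$ whose path lies in $H^*$, and let $s=|S|$. Let $B$ be $H^*$ with the internal vertices of these paths removed, so $B\subseteq G-\star(v)$.
\item If $s=0$, then $H^*=B\subseteq G$ and $\rho(H^*)>0$, a contradiction.
\item If $s\ge1$, let $H\subseteq G$ be $B$ together with $v$ and the threads of indices $r$ and $i\in S$; their endpoints already lie in $B$. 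A direct count gives $\rho(H^*)=\rho(B)+s\,a_r+\sum_{i\in S}a_i-9s$ and $\rho(H)=\rho(B)+a_r+\sum_{i\in S}a_i-9s$.
\item Therefore $\rho(H^*)-\rho(H)=(s-1)a_r\ge0$, and $\rho(H^*)\ge\rho(H)>0$, again a contradiction.
\end{itemize}

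I expect the only delicate step to be the reduction to minimum degree $2$ in this last part. It is what guarantees that new paths are used whole, which makes the potential comparison exact. Repeated relatives, and the cycle case $v_i=v_r$, need only the observation that distinct indices correspond to distinct threads of $G$.
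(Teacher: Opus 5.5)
Your proposal is correct and is essentially the paper's proof. The homomorphism $G^*\to G$ is the paper's replacement of each new path by its two original branches, and it spares you the paper's remark that a simple cycle uses at most two new paths. Your index-by-index degree count gives the same net change $d-2$ at $v_r$, and your identity $\rho(H^*)-\rho(H)=(s-1)a_r$ for $\rho=9|V|-8|E|$ is the paper's observation that restoring the star lowers both $|V|$ and $|E|$ by $(k-1)a_r$.

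One small addition to your simplicity remark: a new cycle based at $v_r$ has length $a_r+a_i\ge3$, not just $\ge2$, because threads $r$ and $i$ already form a cycle in the simple graph $G$.
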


\begin{proof}
No new internal vertex has degree different from $2$.  For a relative
$x\ne v_r$, let $m_x$ be its multiplicity among the non-root branches.  It
loses exactly $m_x$ incident branch edges and gains exactly $m_x$ incident
new-path edges, so its degree is unchanged.  Suppose that $v_r$ occurs with
total multiplicity $m\ge1$ among $v_1,\dots,v_d$.  The root relative loses
$m$ branch incidences.  Among the new paths it gains one incidence for each
of the $d-m$ branches ending at a different relative and two incidences for
each of the $m-1$ new cycles based at $v_r$.  Its net degree change is
therefore
\[
(d-m)+2(m-1)-m=d-2\ge1.
\]
Thus every relative remains of degree at least $3$, no new vertex of degree
at least $3$ is created, and $v$ disappears.  Hence the number of vertices
of degree at least $3$ decreases by exactly one.

For the odd-girth assertion, let $C$ be an odd cycle of $G^*$.  The new
paths are internally disjoint and have the common root $v_r$, so a simple
cycle uses at most two of them (a new cycle based at $v_r$ can only be used
by itself).  Replace each used new $v_r$--$v_i$ path of length $a_r+a_i$ by
the original $v_r$--$v$ branch of length $a_r$ followed by the original
$v$--$v_i$ branch of length $a_i$.  In the coincident-endpoint case, replace
the new cycle by the corresponding two original branches.  The resulting
object is a closed walk in $G$ of exactly the same length as $C$; repeated
use of the root branch is harmless because we only need a closed walk.
Consequently it is an odd closed walk of length $|C|$.  Every odd closed walk
contains an odd cycle no longer than itself, so an odd cycle of length less
than $9$ in $G^*$ would contradict
$\operatorname{odd\mbox{-}girth}(G)\ge9$.

It remains to check maximum average degree.  Suppose that a subgraph $H$ of
$G^*$ has average degree at least $9/4$.  Repeatedly delete vertices of
degree at most $1$ from $H$; since the current average degree is greater than
$2$, this operation strictly increases the average degree.  We may therefore
assume that $H$ has minimum degree at least $2$.  Consequently, if $H$ uses an edge of a new path, the degree-$2$ internal
vertices force $H$ to contain that path in its entirety (and likewise the
entire newly added cycle in the coincident-endpoint case).  Let $k$ be the
number of new paths used by $H$.

If $k=0$, then $H$ is already a subgraph of $G$.  If $k\ge1$, replace the
$k$ new paths by the corresponding original branches through $v$.  For
$k=1$ this changes neither the number of vertices nor the number of edges.
For $k\ge2$, both numbers decrease by
\[
q=(k-1)a_r,
\]
because the $k$ new paths contain $k$ copies of the root length $a_r$,
whereas the restored star contains the root branch only once.  If $H$ has
$n$ vertices and $m$ edges, then $2m/n\ge9/4>2$, so $m>n$.  Therefore, for
$q>0$,
\[
\frac{2(m-q)}{n-q}>\frac{2m}{n}\ge\frac94.
\]
Thus in all cases we obtain a subgraph of $G$ with average degree at least
$9/4$, contradicting $\operatorname{mad}(G)<9/4$.
\end{proof}

By minimality, every graph $G^*$ from Lemma~\ref{lem:star-replacement} admits
a $K(9,4)$-coloring.  This leads to the following precise finite extension
test.

\begin{lemma}\label{lem:rooted-star-extension}
In the notation of Lemma~\ref{lem:star-replacement}, suppose that for every
choice of colors $X_1,\dots,X_d$ satisfying
\[
X_i\in A_{a_r+a_i}(X_r)\qquad(i\ne r)
\]
we have
\[
A_{a_r}(X_r)\cap\bigcap_{i\ne r}A_{a_i}(X_i)\ne\emptyset.
\]
Then the type $(a_1,\dots,a_d)$ is reducible and cannot occur at $v$ in $G$.
\end{lemma}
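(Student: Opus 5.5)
The plan is to argue by contradiction. Suppose $v$ is a vertex of $G$ of type $(a_1,\dots,a_d)$ and that the hypothesis of the lemma holds for the chosen root index $r$. By Lemma~\ref{lem:no-pinched-threads}, $v$ lies on no pinched thread, so $v$ has exactly $d$ incident threads with relatives $v_1,\dots,v_d$. By Lemma~\ref{lem:not-cycle}, each of these relatives has degree at least $3$. Form $G^*$ as in Lemma~\ref{lem:star-replacement}. That lemma gives $\operatorname{odd\mbox{-}girth}(G^*)\ge9$ and $\operatorname{mad}(G^*)<9/4$, and it says $G^*$ has one fewer vertex of degree at least $3$. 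Hence $G^*$ precedes $G$ in the lexicographic order, and by minimality it has a $K(9,4)$-coloring $\psi$. The goal is to turn $\psi$ into a coloring of $G$, contradicting the choice of $G$.

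First I restrict $\psi$. The graph $G-\star(v)$ is a subgraph of $G^*$, since we only deleted $\star(v)$ and added new paths. So $\psi$ restricts to a coloring of $G-\star(v)$. Put $X_i=\psi(v_i)$, noting that equal relatives get equal colors, consistent with the listing with multiplicity. For each $i\ne r$, the new $v_r$--$v_i$ path of length $a_r+a_i$ is colored by $\psi$. This gives a walk of that length in $K(9,4)$ from $X_r$ to $X_i$, so $X_i\in A_{a_r+a_i}(X_r)$. When $v_i=v_r$, the new cycle gives a closed walk of length $a_r+a_i$ at $X_r$, so again $X_i=X_r\in A_{a_r+a_i}(X_r)$. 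The colors $X_1,\dots,X_d$ therefore satisfy the hypothesis.

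By hypothesis there is then a color $Z$ with $Z\in A_{a_r}(X_r)$ and $Z\in A_{a_i}(X_i)$ for all $i\ne r$. Give $v$ the color $Z$. For each $i$, the membership $Z\in A_{a_i}(X_i)$ gives a walk of length $a_i$ from $X_i$ to $Z$ in $K(9,4)$. Color the internal vertices of the $i$-th thread along this walk, as in the proof of Lemma~\ref{lem:long-thread-extension}. The threads at $v$ are internally disjoint, and their internal vertices are adjacent only to vertices on the same thread. So these choices are independent, and every edge of $\star(v)$, including the edges to the $v_i$, is properly colored. This yields a $K(9,4)$-coloring of $G$, a contradiction.

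The only delicate point is the bookkeeping for repeated relatives, and in particular relatives equal to $v_r$. There I must check two things. First, the walk membership still holds through the cycle interpretation. Second, two threads ending at the same relative impose compatible constraints, since both use the single color $\psi(v_i)$. I would also note that $v_i\ne v$ because there are no pinched threads. Everything else is a direct combination of Lemma~\ref{lem:star-replacement} with minimality.
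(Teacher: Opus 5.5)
Your proposal is correct and is essentially the paper's proof: color $G^*$ by minimality, read off $X_i\in A_{a_r+a_i}(X_r)$ from the new paths, give $v$ a color from the displayed intersection, and extend along each incident thread. Your extra bookkeeping only makes explicit what the paper's three-line proof leaves implicit. This covers the restriction to $G-\star(v)\subseteq G^*$, the coincident-relative case via the closed walk, and the independence of the thread extensions.
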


\begin{proof}
Color $G^*$ by minimality.  The newly added paths give exactly the displayed
relations between the colors of the relatives.  A color in the displayed
intersection can be assigned to $v$, after which each original incident
thread is colored along a corresponding walk in $K(9,4)$.
\end{proof}

We shall also use a simpler criterion that does not require a replacement.

\begin{lemma}[forbidden-set union bound]\label{lem:global-union-bound}
Let $v$ have incident thread lengths $a_1,\dots,a_d$.  If
\[
\sum_{i=1}^d |F_{a_i}|<126,
\]
then the type $(a_1,\dots,a_d)$ is reducible.
\end{lemma}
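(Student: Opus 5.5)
Suppose $v$ is a vertex of degree $d\ge3$ with incident thread lengths $a_1,\dots,a_d$ satisfying the displayed inequality. We show directly that the minimal counterexample $G$ cannot contain such a vertex. Let $v_1,\dots,v_d$ be the relatives of $v$, listed with multiplicity.

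The first step is to delete $\star(v)$, that is, $v$ together with the internal vertices of all its incident threads. Call the resulting graph $G'$. It is a subgraph of $G$, so it inherits $\operatorname{odd\mbox{-}girth}(G')\ge9$ and $\operatorname{mad}(G')<9/4$. Deleting $v$ removes a vertex of degree at least $3$. The relatives can only lose degree, and no vertex gains degree. Hence $n_{\ge3}(G')<n_{\ge3}(G)$, and by the choice of $G$ as a lexicographically minimal counterexample, $G'$ admits a $K(9,4)$-coloring $\varphi$. By Lemma~\ref{lem:no-pinched-threads} no thread at $v$ is pinched, so each $v_i$ is a vertex of $G'$ distinct from $v$ and $X_i=\varphi(v_i)$ is defined. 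If several threads end at the same relative, the corresponding colors coincide, which causes no difficulty below.

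The second step is to choose a color for $v$. It must lie in
\[
\bigcap_{i=1}^d A_{a_i}(X_i)
\;=\;
V(K(9,4))\setminus\bigcup_{i=1}^d F_{a_i}(X_i).
\]
Since $K(9,4)$ is vertex-transitive, $|F_{a_i}(X_i)|=|F_{a_i}|$ does not depend on $X_i$. The union of the forbidden sets therefore has size at most $\sum_i|F_{a_i}|<126=\binom94$, so some color $Z$ avoids all of them.

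Finally, we extend the coloring. Set $\varphi(v)=Z$. Since $Z\in A_{a_i}(X_i)$, there is an $X_i$--$Z$ walk of length $a_i$ in $K(9,4)$. Color the internal vertices of the $i$-th thread along this walk, exactly as in Lemma~\ref{lem:long-thread-extension}. The threads are internally disjoint and their internal vertices have no other neighbours, so this gives a $K(9,4)$-coloring of $G$. That contradicts the assumption that $G$ is a counterexample.

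There is no real obstacle in this argument. The only points needing care are that deleting $\star(v)$ strictly decreases the minimization key even when relatives drop to degree $2$, and that a repeated relative imposes no extra constraint: the union bound is taken over all $d$ thread incidences anyway.
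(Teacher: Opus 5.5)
Your proof is correct and follows the same argument as the paper's: delete $\star(v)$, color the rest by minimality, observe that the forbidden colors for $v$ lie in $\bigcup_i F_{a_i}(X_i)$, whose size is less than $126$, and extend along walks. You also spell out two points the paper leaves implicit: why deleting $\star(v)$ strictly lowers the minimization key, and why pinched threads and repeated relatives cause no trouble.
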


\begin{proof}
Delete $\star(v)$ and color the remaining graph by minimality.  If the
relatives receive colors $X_1,\dots,X_d$, then the forbidden colors for $v$
are contained in
\[
\bigcup_{i=1}^d F_{a_i}(X_i).
\]
Its cardinality is strictly less than $126=|V(K(9,4))|$, so at least one
color remains available for $v$, and the coloring extends along all incident
threads.
\end{proof}

\section{Admissible color sets in \texorpdfstring{$K(9,4)$}{K(9,4)}}\label{sec:admissible}

In this section we collect the local facts about $K(9,4)$ that will be used throughout the proof. The main purpose of these definitions and computations is to translate extendability questions in a graph $G$ into finite compatibility questions inside $K(9,4)$.

Recall that
\[
V(K(9,4))=\binom{[9]}{4},
\]
and that two colors are adjacent if and only if they are disjoint.

\subsection{Admissible colors along a thread}

Let $X\in V(K(9,4))$ and let $i\ge 1$ be an integer. Recall that
\[
A_i(X)=\{Y\in V(K(9,4)):\text{ there exists an $X$--$Y$ walk of length }i\text{ in }K(9,4)\},
\]
and
\[
F_i(X)=V(K(9,4))\setminus A_i(X).
\]

Thus, if a vertex $u$ is joined to a vertex colored $X$ by a thread of length $i$, then the set of colors that can be assigned to $u$ is precisely $A_i(X)$, while $F_i(X)$ is the set of forbidden colors.

Since $K(9,4)$ is vertex-transitive, the cardinalities $|A_i(X)|$ and $|F_i(X)|$ do not depend on the choice of $X$. For this reason, when only cardinalities matter, we will simply write $|A_i|$ and $|F_i|$.

\begin{proposition}\label{prop:AiFi-basic}
For every $X\in V(K(9,4))$ and every $i\ge 1$, the set $A_i(X)$ depends only on $i$ and on the intersection pattern with $X$. In particular, the numbers $|A_i(X)|$ and $|F_i(X)|$ are independent of $X$.
\end{proposition}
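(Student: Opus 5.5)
The plan is to exploit the symmetry of $K(9,4)$ under permutations of the ground set $[9]$. First I will observe that every permutation $\sigma$ of $[9]$ induces an automorphism of $K(9,4)$, since $\sigma$ maps $4$-subsets to $4$-subsets and preserves disjointness: $X\cap Y=\emptyset$ if and only if $\sigma(X)\cap\sigma(Y)=\emptyset$. An automorphism maps walks of length $i$ to walks of length $i$, so we get the equivariance
\[
A_i(\sigma(X))=\sigma\bigl(A_i(X)\bigr),
\]
and in particular $Y\in A_i(X)$ if and only if $\sigma(Y)\in A_i(\sigma(X))$.

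Next I will show that membership $Y\in A_i(X)$ depends only on $r=|X\cap Y|$. Take two pairs $(X,Y)$ and $(X',Y')$ with $|X\cap Y|=|X'\cap Y'|=r$. The four sets $X\cap Y$, $X\setminus Y$, $Y\setminus X$ and $[9]\setminus(X\cup Y)$ partition $[9]$, and their sizes are $r$, $4-r$, $4-r$ and $1+r$, the same for both pairs. Choosing a bijection between corresponding blocks gives a permutation $\sigma$ with $\sigma(X)=X'$ and $\sigma(Y)=Y'$. This is precisely the orbit statement behind Table~\ref{tab:pair-orbits}. Applying the equivariance, $Y\in A_i(X)$ if and only if $Y'\in A_i(X')$. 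Hence there is a set $R_i\subseteq\{0,1,2,3,4\}$ such that
\[
A_i(X)=\{Y: |X\cap Y|\in R_i\},
\]
which is the meaning of ``depends only on $i$ and on the intersection pattern with $X$.''

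For the cardinality claim, I will use the formula just obtained. It gives
\[
|A_i(X)|=\sum_{r\in R_i}\binom4r\binom5{4-r},
\]
which is independent of $X$, and then $|F_i(X)|=126-|A_i(X)|$. Alternatively, vertex-transitivity alone gives $|A_i(X)|=|A_i(\sigma(X))|$ for all $\sigma$, and the symmetric group acts transitively on $4$-subsets.

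There is no substantive obstacle here. The only point requiring care is constructing $\sigma$ with $\sigma(X)=X'$ and $\sigma(Y)=Y'$ simultaneously, and the block-size check above handles it.
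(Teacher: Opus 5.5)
Your proof is correct and uses the same symmetry idea as the paper, whose entire proof is a one-line appeal to the vertex-transitivity of $K(9,4)$. Your version is the more complete one: vertex-transitivity alone gives only the equivariance $A_i(\sigma(X))=\sigma(A_i(X))$ and the independence of $|A_i(X)|$ and $|F_i(X)|$ from $X$, whereas the claim that membership $Y\in A_i(X)$ depends only on $|X\cap Y|$ needs transitivity on ordered pairs with a fixed intersection size. Your four-block construction of $\sigma$ supplies exactly that, while the paper only asserts it in the discussion around Table~\ref{tab:pair-orbits}.
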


\begin{proof}
This follows immediately from the vertex-transitivity of $K(9,4)$.
\end{proof}

The exact values of $|A_i|$ and $|F_i|$ for $i\in\{1,\dots,7\}$ are obtained by exhaustive computation.

\begin{proposition}\label{prop:AiFi-values}
For every $X\in V(K(9,4))$, the sets $A_i(X)$ and $F_i(X)$ have the following cardinalities:
\[
|A_1|=5,\qquad |F_1|=121,
\]
\[
|A_2|=21,\qquad |F_2|=105,
\]
\[
|A_3|=45,\qquad |F_3|=81,
\]
\[
|A_4|=81,\qquad |F_4|=45,
\]
\[
|A_5|=105,\qquad |F_5|=21,
\]
\[
|A_6|=121,\qquad |F_6|=5,
\]
\[
|A_7|=125,\qquad |F_7|=1,
\]
\[
|A_8|=126,\qquad |F_8|=0.
\]
Moreover, for every $i\ge 8$ we have
\[
A_i(X)=V(K(9,4))
\qquad\text{and}\qquad
F_i(X)=\emptyset.
\]
\end{proposition}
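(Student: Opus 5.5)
Since $K(9,4)$ is vertex-transitive (indeed the symmetric group on $[9]$ acts transitively on ordered pairs with a given intersection size, Table~\ref{tab:pair-orbits}), I will fix $X=\{0,1,2,3\}$ and describe each $A_i(X)$ as a union of pair orbits. That is, I will determine the set $R_i\subseteq\{0,1,2,3,4\}$ of intersection sizes $r$ for which some $Y$ with $|X\cap Y|=r$ is reachable by a walk of length $i$. By symmetry, every $Y$ in the same orbit is then reachable as well. Hence $|A_i|=\sum_{r\in R_i}\binom4r\binom5{4-r}$, and the orbit sizes $1,20,60,40,5$ for $r=4,3,2,1,0$ give all the stated values once the $R_i$ are known.

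The key structural observation is how one step changes $r$. Suppose $|X\cap Y|=r$ and $Z$ is disjoint from $Y$. Then $Z$ is a $4$-subset of the $5$-set $[9]\setminus Y$, and this $5$-set contains $4-r$ elements of $X$. So $|X\cap Z|\in\{4-r,\,3-r\}$, restricted to values between $0$ and $4-r$; both values occur whenever they are admissible. Starting from $r=4$ at step $0$, I then propagate the reachable sets:
\[
R_0=\{4\},\ R_1=\{0\},\ R_2=\{4,3\},\ R_3=\{0,1\},\ R_4=\{4,3,2\},\ R_5=\{0,1,2\},\ R_6=\{4,3,2,1\},\ R_7=\{0,1,2,3\},\ R_8=\{0,1,2,3,4\}.
\]
Here $r\mapsto\{4-r,3-r\}$: for example, $R_3=\{0\}\cup\{1,0\}$, and $R_4$ arises from $0\mapsto\{4,3\}$ and $1\mapsto\{3,2\}$. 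Summing the orbit sizes gives $5,21,45,81,105,121,125,126$, as claimed. The complements give the $|F_i|$.

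For $i\ge8$ I will argue by induction. If $A_i(X)$ is the whole vertex set, then so is $A_{i+1}(X)$, because every color has a neighbour. Concretely, any $Y$ has some neighbour $W$, and $W\in A_i(X)$ gives a walk of length $i+1$ ending at $Y$. Equivalently, $R_8$ is everything and is stable under the step map, since every $r$ has a predecessor.

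The only delicate step is verifying that the one-step transition is exactly $r\mapsto\{4-r,3-r\}$, and in particular that both values genuinely occur. This is an elementary count: choose which element of the $5$-set $[9]\setminus Y$ to omit, either an element of $X$ or an element outside $X$, when each kind exists. The rest is bookkeeping. As the paper notes, the values are also confirmed by exhaustive computation.
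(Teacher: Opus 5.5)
Your proof is correct, but it takes a different route from the paper. The paper proves this proposition purely by exhaustive machine computation of $A_i(X)$ for $i\le 9$. It then observes that inserting a backtracking pair turns a walk of length $i$ into one of length $i+2$, so $A_8(X)=A_9(X)=V(K(9,4))$ gives the claim for all $i\ge 8$.

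You instead prove by hand the one-step rule $T(r)=\{4-r,3-r\}\cap\{0,\dots,4-r\}$ for the orbit parameter $r=|X\cap Y|$: you omit either an element of $X$ or an element outside $X$ from the $5$-set $[9]\setminus Y$. You combine this with two facts: the stabilizer $S_4\times S_5$ of $X$ acts transitively on each intersection class, and $A_{i+1}(X)$ is exactly the set of colors having a neighbour in $A_i(X)$. Together these give $R_{i+1}=\bigcup_{r\in R_i}T(r)$, and your propagation reproduces the rows of Table~\ref{tab:Ai-orbits} exactly. Your treatment of $i\ge 8$ is also slightly more economical. Since every color has a neighbour, $A_i(X)=V(K(9,4))$ already forces $A_{i+1}(X)=V(K(9,4))$, so only $A_8$ needs to be known, not $A_8$ and $A_9$.

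Your approach buys several things:
\begin{enumerate}
\item It gives a computer-free proof of item (C1) in the paper's list of computer-dependent statements.
\item It explains the pattern behind the table, namely $R_{2j}=\{4-j,\dots,4\}$ and $R_{2j+1}=\{0,\dots,j\}$. In particular, $4\notin R_i$ for odd $i\le 7$ while $4\in R_9$, which recovers Lemma~\ref{lem:k94-odd-girth}.
\item It transfers immediately to every odd graph $K(2k+1,k)$, with $T(r)=\{k-r,k-1-r\}\cap\{0,\dots,k-r\}$.
\end{enumerate}
The paper's computation has only the mild advantage that the same code producing these cardinalities also generates the sets $A_i(X)$ used in its later exhaustive checks.
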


The same computation can be read at the level of the orbit parameter
$r=|X\cap Y|$. For a fixed $X$, the admissible intersection classes are
listed in Table~\ref{tab:Ai-orbits}.

\begin{table}[htbp]
\centering
\caption{Intersection classes occurring in $A_i(X)$.}
\label{tab:Ai-orbits}
\begin{tabular}{c|l|r|r}
\toprule
$i$ & admissible values of $|X\cap Y|$ & $|A_i|$ & $|F_i|$\\
\midrule
1 & $0$ & 5 & 121\\
2 & $3,4$ & 21 & 105\\
3 & $0,1$ & 45 & 81\\
4 & $2,3,4$ & 81 & 45\\
5 & $0,1,2$ & 105 & 21\\
6 & $1,2,3,4$ & 121 & 5\\
7 & $0,1,2,3$ & 125 & 1\\
8 & $0,1,2,3,4$ & 126 & 0\\
\bottomrule
\end{tabular}
\end{table}

The two tables are mutually consistent: summing the orbit sizes from
Table~\ref{tab:pair-orbits} over the classes in a row of
Table~\ref{tab:Ai-orbits} gives the corresponding value of $|A_i|$.
This provides a compact independent sanity check on the exported data.

\begin{proof}
The values through length $9$ were computed exhaustively; in particular,
$A_8(X)=A_9(X)=V(K(9,4))$ for every $X$.  If a walk of length $i$ joins two
colors, inserting a backtracking pair of edges gives a walk of length $i+2$.
Hence the assertions for lengths $8$ and $9$ imply the statement for every
$i\ge8$.  See Section~\ref{sec:computer}.
\end{proof}

\begin{corollary}\label{cor:all-walks-length-at-least-8}
For every pair of colors $X,Y\in V(K(9,4))$ and every integer $i\ge 8$, there exists an $X$--$Y$ walk of length $i$ in $K(9,4)$.
\end{corollary}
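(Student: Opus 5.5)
This follows from Proposition~\ref{prop:AiFi-values} together with the backtracking argument. By the proposition, $A_8(X)=V(K(9,4))$ and $A_9(X)=V(K(9,4))$ for every color $X$. So for every pair $X,Y$ there is an $X$--$Y$ walk of length $8$ and one of length $9$.

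I will argue by induction on $i$ in steps of two. Given an $X$--$Y$ walk $X=W_0,W_1,\dots,W_\ell=Y$ of length $\ell$, the sequence $W_0,W_1,W_0,W_1,\dots,W_\ell$ is an $X$--$Y$ walk of length $\ell+2$. Such a $W_1$ exists because every color has a neighbor ($|A_1|=5>0$); if $\ell=0$, use any neighbor of $X$. Starting from length $8$ this yields every even $i\ge8$, and starting from length $9$ it yields every odd $i\ge9$.

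The only step with real content is the base case: that $A_8(X)$ and $A_9(X)$ are the whole vertex set. This is the exhaustive computation recorded in Proposition~\ref{prop:AiFi-values}. To make it independent of the computer, I would use the orbit description in Table~\ref{tab:Ai-orbits}. Since intersection classes are permutation-invariant, it suffices to exhibit one walk of length $8$ and one of length $9$ for each $r=|X\cap Y|\in\{0,\dots,4\}$.

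For even lengths, two colors with $|X\cap Y|=3$ have a common neighbor, namely the complement of $X\cup Y$, which has size $4$. This gives walks of length $2$ between colors differing by a single swap. Any two colors are joined by at most four swaps, so they are joined by an even walk of length at most $8$. Padding with backtracking then gives length exactly $8$.

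For odd lengths, follow one edge from $X$ to a neighbor $Z$, then take an even walk of length $8$ from $Z$ to $Y$ as above; this gives length $9$. The number of swaps needed from $Z$ to $Y$ is at most four, so the total is $1+8=9$.

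The main obstacle is only bookkeeping: checking that the swap distance between any two $4$-subsets of $[9]$ is at most $4$. This holds because it equals $4-|X\cap Y|\le4$.
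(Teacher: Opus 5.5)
Your proof is correct, and its skeleton is the paper's. There the corollary is declared immediate from Proposition~\ref{prop:AiFi-values}, whose proof takes $A_8(X)=A_9(X)=V(K(9,4))$ from an exhaustive computation and then inserts backtracking pairs, exactly as you do.

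The genuine difference is the base case. The paper leaves it to the computer (item (C1) in Section~\ref{sec:computer}), while you prove it by hand. For $|X\cap Y|=3$ the $4$-set $[9]\setminus(X\cup Y)$ is a common neighbor. A chain of $4-|X\cap Y|$ swaps therefore gives an $X$--$Y$ walk of length $2(4-|X\cap Y|)\le 8$, which pads to length exactly $8$, and prefixing one edge gives length $9$.

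Your route makes the corollary, and with it Lemma~\ref{lem:no-long-threads}, entirely computer-free. The same chain also gives the inclusions $A_{2j}(X)\supseteq\{Y:|X\cap Y|\ge 4-j\}$, which is one half of the even rows of Table~\ref{tab:Ai-orbits}. In exchange, the paper's computation produces the exact sets $A_i(X)$ for all $i\le 9$ in one uniform sweep, and the later finite verifications need those anyway.
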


\begin{proof}
Immediate from Proposition~\ref{prop:AiFi-values}.
\end{proof}

\subsection{A two-center compatibility check}

For colors $X,Y\in V(K(9,4))$ and positive integers $a,b$, put
\[
U_{a,b}(X,Y)=A_a(X)\cap A_b(Y).
\]
Thus $U_{a,b}(X,Y)$ is the set of possible colors of a center joined to
$X$ and $Y$ by threads of lengths $a$ and $b$.

The only two-center computation needed in the final proof is the following.

\begin{proposition}\label{prop:334-m4-compatibility}
Let $1\le a\le b\le7$.  Suppose that colors $X_1,X_2,Y_1,Y_2$ satisfy
\[
U_{3,3}(X_1,X_2)\ne\emptyset,
\qquad
U_{a,b}(Y_1,Y_2)\ne\emptyset.
\]
If
\[
U_{a,b}(Y_1,Y_2)\cap
\bigcup_{S\in U_{3,3}(X_1,X_2)} A_4(S)=\emptyset,
\]
then
\[
(a,b)\in\{(1,1),(1,2),(1,3)\}.
\]
Equivalently, outside these three pairs, any two realizable centers of the
indicated types can be joined through the common thread of length $4$.
\end{proposition}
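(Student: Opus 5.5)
The proof will be a finite exhaustive computation, organized so that symmetry keeps it small. First I will reduce the quantification over the pair $(X_1,X_2)$. By Proposition~\ref{prop:AiFi-basic} and the pair-orbit decomposition in Table~\ref{tab:pair-orbits}, the symmetric group on $[9]$ acts on ordered pairs of colors with exactly five orbits, indexed by $s=|X_1\cap X_2|$. For each $s\in\{0,\dots,4\}$ I fix one representative pair. I will then compute the set $U_{3,3}(X_1,X_2)=A_3(X_1)\cap A_3(X_2)$ and discard the orbits where it is empty. By Table~\ref{tab:Ai-orbits}, $A_3(X)$ consists of the colors meeting $X$ in at most one element, so this step is elementary and could also be checked by hand. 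For each surviving $s$, I form the reachable set
\[
R_s=\bigcup_{S\in U_{3,3}(X_1,X_2)}A_4(S).
\]
Here $A_4(S)$ is the set of colors meeting $S$ in at least two elements.

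Next comes the quantification over $(Y_1,Y_2)$, for each $(a,b)$ with $1\le a\le b\le 7$. The symmetry has now been used up on $(X_1,X_2)$, so I will not fix a representative again. Instead I will loop over all $126^2$ ordered pairs $(Y_1,Y_2)$. Using the stabilizer of $(X_1,X_2)$ to cut this down is possible but is not needed for exact correctness. For each pair, I compute $U_{a,b}(Y_1,Y_2)$ as a bitset intersection of the precomputed rows $A_a(Y_1)$ and $A_b(Y_2)$. The pair is a witness of failure exactly when $U_{a,b}(Y_1,Y_2)$ is nonempty and disjoint from $R_s$. The claim is that witnesses exist only when $(a,b)\in\{(1,1),(1,2),(1,3)\}$.

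I expect that for most $(a,b)$ no computation is really needed, because of a quick counting argument. Since $|A_4(S)|=81$ is large, $R_s$ should already be nearly all of $V(K(9,4))$, and $U_{a,b}$ is empty or large. A witness can only occur when $U_{a,b}(Y_1,Y_2)$ is very small. This explains why the exceptional pairs all involve $a=1$: there $U_{1,b}\subseteq A_1(Y_1)$ has at most $5$ elements. For $a\ge 2$, I expect $|U_{a,b}|$ to exceed $|V\setminus R_s|$ whenever it is nonempty, or at least to meet $R_s$. Even so, I will rely on the exhaustive bitset check rather than on such heuristics.

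The main obstacle is making the computation trustworthy and correctly specified, not its size, which is trivial. Two points need care. First, the quantifier structure: $R_s$ is a union over all admissible centers $S$, not a choice of one particular $S$. Second, the precise meaning of the conclusion: it only states that witnesses are confined to the three listed pairs, so I do not need to show that witnesses actually occur for those pairs, although recording them is a useful sanity check. The precomputed $A_i$ rows will themselves be validated against Table~\ref{tab:Ai-orbits}. Finally, I will cross-check the result with an independent implementation, as described in Section~\ref{sec:computer}.
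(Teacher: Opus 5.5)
Your proposal is correct and is essentially the paper's proof, which also settles this proposition by an exact exhaustive check: it tests whether some nonempty $U_{a,b}(Y_1,Y_2)$ misses $\bigcup_{S\in U_{3,3}(X_1,X_2)}A_4(S)$. Your reduction of $(X_1,X_2)$ to the five intersection orbits (the orbit $s=0$ drops out because $U_{3,3}$ is empty there), followed by a full loop over $(Y_1,Y_2)$, is valid by the simultaneous $S_9$-equivariance of all the sets involved; it sits between the paper's direct C++ enumeration of all ordered pairs and its orbit-state Python cross-check.
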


\begin{proof}
This is an exact finite computation in $K(9,4)$.  A direct C++ verifier
enumerates all ordered endpoint pairs, forms all nonempty sets
$U_{3,3}(X_1,X_2)$ and $U_{a,b}(Y_1,Y_2)$, and tests the displayed
intersection.  The only pairs for which an obstruction exists are
$(1,1),(1,2),(1,3)$.  An independent Python implementation based on orbit
states gives the same result.  See Section~\ref{sec:computer}.
\end{proof}

\section{Positive local types}\label{sec:deg3}

\subsection{The excess parameter}
The final contradiction is most transparent after the degree-$2$ vertices are encoded through the lengths of their incident threads.  The quantity introduced below measures exactly how much a vertex contributes above the threshold forced by $\operatorname{mad}(G)<9/4$.

For a vertex $v$ of degree $d\ge3$ with incident thread lengths
$a_1,\dots,a_d$, define
\[
w(v)=\sum_{i=1}^d(a_i-1)
\]
and its \emph{local excess}
\[
\varepsilon(v)=w(v)-(8d-18)
=\sum_{i=1}^d a_i-9d+18.
\]
For $d=3$ this is $a_1+a_2+a_3-9$.

The next lemma is the main finite reduction.  It uses only
Lemma~\ref{lem:global-union-bound}, the rooted star replacement, and the
finite geometry of $K(9,4)$.

\begin{lemma}[positive-type reduction]\label{lem:positive-type-reduction}
Let $v$ be a vertex of $G$ with positive local excess.  Then $v$ has degree
$3$ and type
\[
(3,3,4).
\]
\end{lemma}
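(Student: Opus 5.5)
Since $v$ has positive excess, $\sum_i a_i\ge 9d-17$, while Lemma~\ref{lem:no-long-threads} gives $a_i\le7$ for every $i$, and Lemma~\ref{lem:no-pinched-threads} guarantees that the type of $v$ is well defined. The plan is first to reduce to finitely many types using the forbidden-set union bound, and then to dispose of the survivors with the rooted star test.

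\textbf{Step 1: finitely many candidate types.} By Proposition~\ref{prop:AiFi-values} we have $|F_a|=121,105,81,45,21,5,1$ for $a=1,\dots,7$. This sequence is strictly decreasing and convex in $a$. For fixed $d$ and fixed total length $\sum a_i=s$, the sum $\sum|F_{a_i}|$ is therefore smallest for the most balanced tuple. Whenever this minimum is below $126$, every tuple with that sum and degree is reducible by Lemma~\ref{lem:global-union-bound}.

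\begin{itemize}
\item For $d\ge4$: positive excess needs average thread length above $9-17/d\ge 19/4$, so the most balanced tuples use lengths $4,5,6,7$.
\item At $d=4$, $s\ge19$, the balanced choice $(4,5,5,5)$ gives $45+3\cdot21=108<126$. Unbalanced tuples are worse, so these must be checked by direct enumeration of tuples with $a_i\le7$ and $\sum a_i\ge19$, rather than by convexity alone. This enumeration yields a finite list of possible survivors such as $(3,x,y,z)$ with large $x,y,z$.
\item For $d\ge5$ the average thread length is at least $5.6$, and the union bound should dispose of everything except tuples containing a short thread. Those survivors must be listed.
\item For $d=3$, $s\ge10$: tuples with a $1$ or $2$, and tuples like $(3,3,4)$ with $81+81+45=207$, survive the union bound. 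Anything with all $a_i\ge4$ and sum $\ge 12$ is killed, e.g.\ $(4,4,4)$ gives $135$, which still fails, so this case must also be checked carefully.
\end{itemize}

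The output of Step~1 is an explicit finite list $\mathcal L$ of types with positive excess that the union bound does not eliminate. Since $d$ is bounded once all $a_i\le7$ and the sum exceeds $9d-18$, this requires $9d-17\le7d$, i.e.\ $d\le8$, so $\mathcal L$ is finite.

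\textbf{Step 2: rooted star test on the survivors.} For each type in $\mathcal L\setminus\{(3,3,4)\}$, I will choose a root index $r$ and apply Lemma~\ref{lem:rooted-star-extension}. By vertex-transitivity, fix $X_r$ and enumerate all $X_i\in A_{a_r+a_i}(X_r)$, checking that $A_{a_r}(X_r)\cap\bigcap_{i\ne r}A_{a_i}(X_i)$ is never empty. Table~\ref{tab:Ai-orbits} shows that each condition depends only on intersection sizes, which makes the search small. The exact bitset enumeration runs over at most $126^{d-1}$ tuples, reduced further by the symmetric group stabilizing $X_r$. The natural choice is to root at the shortest thread, since a short root constrains the other relatives most strongly through $X_i\in A_{a_r+a_i}(X_r)$. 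If that root fails, I will try the other roots.

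\textbf{Main obstacle.} I expect the hard part to be confirming that $(3,3,4)$ is the \emph{only} failure. Degree-$3$ types with a thread of length $1$ or $2$, such as $(1,5,5)$ or $(2,4,4)$, might resist both tests for some root choice. There the plan is to rely on the computer certificate over all three roots and record which root works for each type. The analogous check must be carried out for the finitely many surviving degree-$\ge4$ types. Finally, I will verify that $(3,3,4)$ itself has positive excess, namely excess $1$, so that it genuinely remains as the exception.
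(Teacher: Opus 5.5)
Your plan is the same as the paper's proof: enumerate the nondecreasing tuples with entries in $\{1,\dots,7\}$ and positive excess for $3\le d\le 8$ (there are none for $d\ge 9$), discard those with $\sum_i|F_{a_i}|<126$ by Lemma~\ref{lem:global-union-bound}, and certify every remaining tuple other than $(3,3,4)$ by the rooted-star test of Lemma~\ref{lem:rooted-star-extension} with a shortest root, which the paper does by exact exhaustive computation. One correction to Step~1: the sequence $121,105,81,45,21,5,1$ is not convex (its differences are $-16,-24,-36,-24,-16,-4$), and a small \emph{minimum} of $\sum_i|F_{a_i}|$ at a given total length would not eliminate the other tuples of that length anyway (for example $(4,4,5)$ is eliminated but $(3,4,6)$ and $(1,5,7)$ are not), so Step~1 must rest entirely on the direct enumeration you fall back on, which in degree $3$ leaves $32$ tuples for the star test, including $(4,4,4)$.
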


\begin{proof}
By Lemma~\ref{lem:no-long-threads}, every incident thread has length at most
$7$.  We therefore enumerate all nondecreasing tuples with entries in
$\{1,\dots,7\}$ and positive local excess.

For degrees $3,4,5,6,7,8$ the numbers of positive tuples are respectively
\[
61,\ 64,\ 41,\ 19,\ 7,\ 2.
\]
The forbidden-set union bound immediately reduces respectively
\[
29,\ 43,\ 38,\ 19,\ 7,\ 2
\]
of them.  For every remaining tuple we apply the rooted star replacement,
using a shortest incident thread as the root, and check the finite extension
condition from Lemma~\ref{lem:rooted-star-extension}.  The only tuple that
fails this extension test is
\[
(3,3,4).
\]
The computation is described in Section~\ref{sec:computer} and reproduced by
\path{anc/scripts/verify_revised_reductions.py}.

For $d\ge9$ no positive tuple exists, since
\[
\varepsilon(v)\le 7d-9d+18=18-2d\le0.
\]
\end{proof}

For clarity, the finite part of the preceding proof is summarized in
Table~\ref{tab:positive-audit}. The column ``union bound'' counts the types
eliminated immediately by Lemma~\ref{lem:global-union-bound}; ``star tested''
counts the remaining types passed to the exact rooted-star verifier.

\begin{table}[htbp]
\centering
\caption{Audit of positive local types.}
\label{tab:positive-audit}
\begin{tabular}{c|r|r|r|l}
\toprule
$d$ & positive types & union bound & star tested & survivors\\
\midrule
3 & 61 & 29 & 32 & $(3,3,4)$\\
4 & 64 & 43 & 21 & none\\
5 & 41 & 38 & 3 & none\\
6 & 19 & 19 & 0 & none\\
7 & 7 & 7 & 0 & none\\
8 & 2 & 2 & 0 & none\\
\bottomrule
\end{tabular}
\end{table}

The table is not an additional hypothesis: it is simply a compact certificate
for the finite enumeration described in Section~\ref{sec:computer}. The
complete source-level candidate audit is included with the ancillary files.

\subsection{The exceptional type \texorpdfstring{$(3,3,4)$}{(3,3,4)}}

Thus the whole positive part of the final discharging argument is reduced to
a single degree-$3$ type.  We next determine what can occur at the other end
of its unique $4$-thread.

\begin{lemma}\label{lem:334-degree3-support}
Let $v$ be a $(3,3,4)$-vertex and let $u$ be the relative of $v$ through its
thread of length $4$.  If $\deg(u)=3$, then $u$ has one of the types
\[
(1,1,4),\qquad(1,2,4),\qquad(1,3,4).
\]
\end{lemma}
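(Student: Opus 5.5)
We argue by contradiction using a double-star replacement together with Proposition~\ref{prop:334-m4-compatibility}. Let $v$ be a $(3,3,4)$-vertex, with relatives $x_1,x_2$ via its $3$-threads and $u$ via its $4$-thread. Suppose $\deg(u)=3$ and let $a\le b$ be the lengths of the other two threads at $u$, with relatives $y_1,y_2$. Since $u$ also has a $4$-thread, $(a,b)$ together with $4$ is one of the local types that can actually occur at $u$. We must show $(a,b)\in\{(1,1),(1,2),(1,3)\}$. Lemma~\ref{lem:no-long-threads} gives $a\le b\le 7$, so the proposition applies. It remains to convert its conclusion into an extension argument.

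\textbf{Construction of the reduced graph.} Delete $\star(v)\cup\star(u)$; the $4$-thread between $u$ and $v$ lies in both stars. To keep the constraints forced by the coloring, add two gadgets:
\begin{itemize}[nosep]
\item a new vertex $s$ joined to $x_1,x_2$ by new paths of length $3$;
\item a new vertex $t$ joined to $y_1,y_2$ by new paths of length $a$ and $b$.
\end{itemize}
Equivalently, one may add a single $x_1$--$x_2$ path of length $6$ and a $y_1$--$y_2$ path of length $a+b$, as in Lemma~\ref{lem:star-replacement}. The latter version is better: it lowers the number of vertices of degree at least $3$ by two. The degree bookkeeping is then as in Lemma~\ref{lem:star-replacement}: each relative loses one branch and gains one path end. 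Coincidences among $x_1,x_2,y_1,y_2$, or with $u$ or $v$, need separate care. In those cases one either gets a pinched thread, which Lemma~\ref{lem:no-pinched-threads} excludes, or a cycle, which is handled as before.

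\textbf{Odd girth and mad.} Each new path is replaced by the two original branches through $v$ (respectively $u$). This gives closed walks of the same length, so $\operatorname{odd\mbox{-}girth}\ge9$ is preserved. The mad argument from Lemma~\ref{lem:star-replacement} carries over, with $k\le1$ new path on each side. By minimality the reduced graph has a $K(9,4)$-coloring.

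\textbf{Extension.} Let the relatives receive colors $X_1,X_2$ and $Y_1,Y_2$. The path of length $6$ gives a walk of length $6$ from $X_1$ to $X_2$, hence a color $S$ at its midpoint, so $U_{3,3}(X_1,X_2)\ne\emptyset$. Likewise $U_{a,b}(Y_1,Y_2)\ne\emptyset$. We need colors $S\in U_{3,3}(X_1,X_2)$ and $T\in U_{a,b}(Y_1,Y_2)$ with $T\in A_4(S)$. Once we have them, we color $v$ by $S$ and $u$ by $T$, and fill all threads along walks. Note that $A_4$ is symmetric, so the order of $S$ and $T$ does not matter. If $(a,b)\notin\{(1,1),(1,2),(1,3)\}$, Proposition~\ref{prop:334-m4-compatibility} provides exactly such $S$ and $T$. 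This contradicts the choice of $G$ as a counterexample, which proves the lemma.

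\textbf{Main obstacle.} The hardest part is proving that the mad bound survives in the double replacement. A subgraph may use both new paths, and then the two stars must be restored simultaneously. The $4$-thread adds edges and vertices in equal number $3$ plus one extra edge, which joins two new degree-$3$ vertices. So restoring both stars takes a subgraph with $(n,m)$ to one with $(n+5,m+6)$ rather than $(n,m)$ or fewer.

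I would first try to check directly that $2(m+6)/(n+5)\ge 9/4$ whenever $2m/n\ge9/4$. This reduces to $48\ge 9\cdot5-8m+9n\cdot\ldots$; the clean form is
\[
8(m+6)-9(n+5)=(8m-9n)+3\ge0 .
\]
So the inequality goes the right way and the restoration works.

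If instead the two relatives coincide with $u$ or $v$, that case would be handled by the simpler single-star replacement of Lemma~\ref{lem:rooted-star-extension}.
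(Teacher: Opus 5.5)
Your main line is the paper's own proof: the same graph $H$ (a new $x_1$--$x_2$ path of length $6$ and a new $y_1$--$y_2$ path of length $a+b$), the same closed-walk argument for odd girth, the same count-preserving restoration for $\operatorname{mad}$, and the same appeal to Proposition~\ref{prop:334-m4-compatibility}. The real gap is the degenerate case in which a $3$-thread of $v$ ends at $u$ (equivalently, some $y_j=v$). In that case an endpoint of your length-$6$ path is a deleted vertex, so $H$ is not even defined. Your proposed fallback does not work. Lemma~\ref{lem:rooted-star-extension} cannot be applied at $v$, because $(3,3,4)$ is exactly the type that survives the rooted-star test in Lemma~\ref{lem:positive-type-reduction}; that is why the present lemma is needed at all. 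At $u$ the type would be $(3,4,b)$, e.g.\ $(2,3,4)$ or $(3,3,4)$, for which no reduction is available either. The missing observation is that this case cannot occur. The $3$-thread and the $4$-thread are distinct threads joining $v$ and $u$, so together they form a $7$-cycle, contradicting $\operatorname{odd\mbox{-}girth}(G)\ge 9$. Hence all four non-common branches survive the deletion. The cases $x_i=v$ or $y_j=u$ are pinched threads, and coincidences among $x_1,x_2,y_1,y_2$ are handled by the cycle interpretation, as you say.

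Your ``main obstacle'' is not an obstacle, and its count is wrong. As you already noted with $k\le 1$ on each side, replacing each used new path by the two original branches through its center preserves both the vertex and edge counts. A path of length $p+q$ has $p+q-1$ internal vertices and $p+q$ edges, exactly as many as the two branches together with their center. So the restored graph is a subgraph of $G$ with the same average degree, and the common $4$-thread never has to be put back. If you did add it, it would contribute $3$ vertices and $4$ edges, giving $(n+3,m+4)$ rather than $(n+5,m+6)$, with $8(m+4)-9(n+3)=(8m-9n)+5\ge 0$. The conclusion survives, but the paragraph, including the garbled first inequality, should simply be dropped. Likewise, your gadget with new vertices $s,t$ is literally the same graph as the path version, since $s$ and $t$ have degree $2$.
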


\begin{proof}
Write the two other thread lengths at $u$ as $a\le b$.  The two other
threads at $v$ both have length $3$.  Neither can have $u$ as its other
endvertex, because together with the common $4$-thread it would form a
$7$-cycle, contrary to the odd-girth assumption.  Hence all four noncommon
branches below survive when the two centers are deleted.

Let $x_1,x_2$ be the two other relatives of $v$, and let $y_1,y_2$ be the two
other relatives of $u$.  Delete $u,v$, all internal vertices of their
incident threads, and the common $4$-thread.  Add an internally new path of
length $6$ between $x_1$ and $x_2$, and an internally new path of length
$a+b$ between $y_1$ and $y_2$; if the two endpoints of one of these paths
coincide, interpret it as a cycle of the prescribed length.  Call the
resulting graph $H$.

Replacing either new path by the corresponding two original branches through
its deleted center preserves both parity and length.  Hence every odd cycle
of $H$ yields an odd closed walk of the same length in $G$, and therefore
$\operatorname{odd\mbox{-}girth}(H)\ge9$.

Suppose that a subgraph $J$ of $H$ has average degree at least $9/4$.  As in
Lemma~\ref{lem:star-replacement}, delete vertices of degree at most $1$ until
$J$ has minimum degree at least $2$.  Any new path whose edge is used by
$J$ is then used in its entirety.  Replace each such new path by the two
original branches through its deleted center.  A path of length $p+q$ has
$p+q-1$ internal vertices, while the two original branches together with
their center also contribute exactly $p+q-1$ vertices and $p+q$ edges.
Thus this operation preserves both the number of vertices and the number of
edges and produces a subgraph of $G$ with the same average degree, a
contradiction.  Hence $\operatorname{mad}(H)<9/4$.  The degrees of all
external relatives are restored by the replacement paths, while $u$ and $v$
disappear, so $H$ has two fewer vertices of degree at least $3$ than $G$.
By minimality, $H$ admits a $K(9,4)$-coloring.

The length-$6$ replacement path implies
$U_{3,3}(\varphi(x_1),\varphi(x_2))\ne\emptyset$, and the length-$(a+b)$
replacement path implies
$U_{a,b}(\varphi(y_1),\varphi(y_2))\ne\emptyset$: in each case take the
appropriate midpoint of the corresponding walk.  If
$(a,b)\notin\{(1,1),(1,2),(1,3)\}$, Proposition~\ref{prop:334-m4-compatibility}
provides colors for $v$ and $u$ that can be joined by a walk of length $4$.
The coloring then extends independently along all deleted noncommon threads
and along the common $4$-thread, yielding a coloring of $G$, a contradiction.
\end{proof}

\section{Capacity at vertices of degree at least \texorpdfstring{$4$}{4}}\label{sec:higher-degree}

For a type $\tau=(a_1,\dots,a_d)$ let
\[
q_4(\tau)=|\{i:a_i=4\}|
\]
be the number of incident threads of length $4$.  In the final discharging
argument, a vertex can receive charge only through such threads.  The next
finite lemma says that every nonpositive type that would have too little
negative excess to absorb all possible incoming units is itself reducible.

\begin{lemma}[capacity reduction]\label{lem:capacity-reduction}
Let $4\le d\le7$, and let $v$ have a nonpositive type
$\tau=(a_1,\dots,a_d)$.  If
\[
q_4(\tau)>-\varepsilon(\tau),
\]
then $\tau$ is reducible.
\end{lemma}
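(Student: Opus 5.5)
The plan is a finite enumeration followed by the two reduction tests already available, in the same way as Lemma~\ref{lem:positive-type-reduction}. By Lemma~\ref{lem:no-long-threads} every entry of $\tau$ lies in $\{1,\dots,7\}$. So for each $d\in\{4,5,6,7\}$ I would list all nondecreasing tuples $\tau=(a_1,\dots,a_d)$ with these entries satisfying
\[
\varepsilon(\tau)=\sum_i a_i-9d+18\le 0
\qquad\text{and}\qquad
q_4(\tau)>-\varepsilon(\tau).
\]
Since $q_4(\tau)\le d$, the second condition gives $\sum_i a_i\ge 8d-17$; together with the first, $\sum_i a_i$ lies in the window $[8d-17,\,9d-18]$. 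Moreover, at least $-\varepsilon(\tau)+1$ entries must equal $4$. This leaves only a short list for each $d$. For example, for $d=4$ we need $\sum a_i\in\{15,\dots,18\}$ with enough $4$'s, as in $(4,4,4,4)$ with $\varepsilon=-2$ and $q_4=4$, or $(3,4,4,4)$ with $\varepsilon=-3$ and $q_4=3$. The restriction $d\le7$ comes from the statement. For $d\ge 8$ the lemma is not claimed, presumably because sums of large forbidden-set bounds can absorb the charge anyway, or because the discharging handles that case separately.

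For each candidate I would first try Lemma~\ref{lem:global-union-bound}. Using Proposition~\ref{prop:AiFi-values}, compute $\sum_i |F_{a_i}|$ and check whether it is below $126$. Threads of length $4$ contribute $45$ each, and longer ones contribute $21$, $5$ or $1$. Types with many $4$'s and some long threads should pass easily. For instance, $(4,4,5,5)$ gives $45+45+21+21=132$, which just fails, while $(4,4,5,6)$ gives $116$, which passes. Types dominated by short threads fail the union bound. For those I would apply Lemma~\ref{lem:rooted-star-extension}, rooting at a shortest incident thread as before, and run the exact bitset verifier over all relative colorings $X_i\in A_{a_r+a_i}(X_r)$. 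Using the orbit structure, it suffices to fix $X_r$ up to symmetry. I would also consider alternative roots if the shortest root fails. Rooting at a $4$-thread is natural here because those threads are exactly the ones carrying incoming charge.

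The main obstacle is the possibility that some capacity-deficient type survives both tests. One likely failure point is a type such as $(4,4,4,4)$ or $(3,4,4,4)$. There the rooted star forces fairly weak constraints ($X_i\in A_8(X_r)$, which is all of $V(K(9,4))$), so the test amounts to asking whether four arbitrary colors always have a common point in $A_4$. That union bound gives $180>126$, so it is not automatic. I would first check it exactly: two colors are mutually "far" under $A_4$ only when $|Y\cap X|\le1$, and the set-cover formulation decides whether four $F_4$-sets can cover $V(K(9,4))$. If that genuinely fails, the fallback is a two-center argument in the spirit of Lemma~\ref{lem:334-degree3-support}, exploiting the fact that the donor at the other end of a $4$-thread is a $(3,3,4)$-vertex whose $3$-threads impose structure. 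I expect, however, that the exact check succeeds and that the computer certificate closes the enumeration, as in Table~\ref{tab:positive-audit}.
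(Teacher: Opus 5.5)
Your plan is essentially the paper's proof: the paper enumerates the $26$ candidate types ($11,10,4,1$ for $d=4,5,6,7$) and certifies every one by the exact rooted-star test of Lemma~\ref{lem:rooted-star-extension}, rooted at a shortest thread. Your union-bound prefilter is valid but not needed, and alternative roots or your two-center fallback are never required; that fallback would in any case only exclude $\tau$ next to $(3,3,4)$-donors, not $\tau$ itself. One slip: $(3,4,4,4)$ is not a candidate, since $q_4=3=-\varepsilon$ fails the strict inequality, and $(4,4,5,6)$ has $\varepsilon=1$, so it lies outside this lemma.
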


\begin{proof}
There are finitely many such tuples because every $a_i\le7$.  Their numbers
for degrees $4,5,6,7$ are respectively
\[
11,\ 10,\ 4,\ 1.
\]
For each tuple, the rooted star replacement with a shortest thread as root
satisfies the extension condition of
Lemma~\ref{lem:rooted-star-extension}.  The exhaustive verification is
reported in Section~\ref{sec:computer}; no tuple survives.  The full list of $26$ candidate exceptions, together with their excess and $q_4$ values, is recorded in Appendix~\ref{app:certificates}.
\end{proof}

For large degree no computation is required.

\begin{lemma}\label{lem:capacity-large-degree}
Let $v$ have degree $d\ge8$ and nonpositive excess.  Then
\[
q_4(v)\le-\varepsilon(v).
\]
\end{lemma}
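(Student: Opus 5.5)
We argue directly from the definition of the local excess. Let $v$ have degree $d\ge8$ and type $(a_1,\dots,a_d)$, and set $q=q_4(v)$. We need
\[
q\le-\varepsilon(v)=9d-18-\sum_{i=1}^d a_i .
\]
The idea is that threads of length $4$ are short, while every thread has length at most $7$ by Lemma~\ref{lem:no-long-threads}. For large $d$ the threshold $9d-18$ is far above what the sum can reach, so even the crude bound
\[
\sum_i a_i\le 4q+7(d-q)=7d-3q
\]
should give enough room.

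Substituting this bound, it suffices to show
\[
q\le 9d-18-(7d-3q)=2d-18+3q,
\]
that is, $2q+2d-18\ge0$. This holds whenever $d\ge9$, and also whenever $d=8$ and $q\ge1$. If $q=0$, the claim $0\le-\varepsilon(v)$ is exactly the hypothesis that $v$ has nonpositive excess. So every case is covered, and the nonpositivity hypothesis is needed only when $q=0$.

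There is no real obstacle here. The only points to check are that Lemma~\ref{lem:no-long-threads} applies to every incident thread, and that there are no pinched threads (Lemma~\ref{lem:no-pinched-threads}), so the type is well defined with exactly $d$ entries. No computer verification is needed.
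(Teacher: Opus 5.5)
Your proof is correct and follows essentially the paper's argument. You bound $\sum_i a_i\le 7d-3q$ using Lemma~\ref{lem:no-long-threads}, which reduces the claim to $d+q\ge 9$, and you handle the case $q=0$ by the nonpositive-excess hypothesis.
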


\begin{proof}
Put $q=q_4(v)$.  Since every thread has length at most $7$,
\[
\sum_i a_i\le4q+7(d-q)=7d-3q.
\]
Consequently
\[
-\varepsilon(v)=9d-18-\sum_i a_i
\ge2d-18+3q.
\]
If $q=0$ there is nothing to prove.  If $q\ge1$ and $d\ge8$, then
$d+q\ge9$, and hence
\[
2d-18+3q-q=2(d+q-9)\ge0.
\]
Thus $-\varepsilon(v)\ge q$.
\end{proof}

Combining Lemmas~\ref{lem:positive-type-reduction},
\ref{lem:capacity-reduction}, and~\ref{lem:capacity-large-degree} gives the
form needed later.

\begin{corollary}\label{cor:higher-capacity}
Every vertex $u$ of $G$ with $\deg(u)\ge4$ has $\varepsilon(u)\le0$ and
\[
q_4(u)\le-\varepsilon(u).
\]
\end{corollary}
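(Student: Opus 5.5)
The corollary is a bookkeeping consequence of the three preceding lemmas, so the plan is a case split on the degree $d=\deg(u)\ge4$.

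First, I establish $\varepsilon(u)\le0$. By Lemma~\ref{lem:no-pinched-threads}, $u$ lies on no pinched thread, so it has a well-defined type $(a_1,\dots,a_d)$. By Lemma~\ref{lem:positive-type-reduction}, every vertex of positive local excess has degree $3$ and type $(3,3,4)$. Since $d\ge4$, the contrapositive gives $\varepsilon(u)\le0$. In particular the type of $u$ is nonpositive, which is exactly the hypothesis needed for the two capacity lemmas.

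Second, I establish the capacity inequality, splitting by degree.

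If $4\le d\le7$, suppose for contradiction that $q_4(u)>-\varepsilon(u)$. Lemma~\ref{lem:capacity-reduction} then says the type of $u$ is reducible, meaning the rooted star extension condition of Lemma~\ref{lem:rooted-star-extension} holds for it. That lemma shows such a type cannot occur at a vertex of the minimal counterexample $G$. This contradicts the fact that $u$ is a vertex of $G$ with that type, so $q_4(u)\le-\varepsilon(u)$.

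If $d\ge8$, no reducibility is needed: Lemma~\ref{lem:capacity-large-degree} applies directly to $u$, since $\varepsilon(u)\le0$, and yields the inequality. That lemma uses the bound $a_i\le7$ from Lemma~\ref{lem:no-long-threads}, which holds in $G$.

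I expect no genuine obstacle. The only point requiring care is that ``reducible'' in Lemma~\ref{lem:capacity-reduction} means precisely ``cannot occur in $G$''. This is what Lemma~\ref{lem:rooted-star-extension} asserts: the replacement graph $G^*$ from Lemma~\ref{lem:star-replacement} has fewer vertices of degree at least $3$ and so is colorable by minimality, and the extension condition then recolors $G$. With that understood, the two degree ranges together cover all $d\ge4$ and complete the proof.
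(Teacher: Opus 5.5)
Your proposal is correct and matches the paper's proof. Lemma~\ref{lem:positive-type-reduction} rules out positive excess at degree at least $4$; for $4\le d\le7$ a violation is a reducible type by Lemma~\ref{lem:capacity-reduction} and so cannot occur, and for $d\ge8$ Lemma~\ref{lem:capacity-large-degree} applies with the nonpositive-excess hypothesis supplied by your first step. Your remarks on the well-definedness of the type (Lemma~\ref{lem:no-pinched-threads}) and on the meaning of ``reducible'' (Lemmas~\ref{lem:star-replacement} and~\ref{lem:rooted-star-extension}) only spell out what the paper leaves implicit.
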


\begin{proof}
Positive excess is impossible by Lemma~\ref{lem:positive-type-reduction}.
For degrees $4$ through $7$, a violation of the displayed inequality is
reducible by Lemma~\ref{lem:capacity-reduction}; for degree at least $8$ use
Lemma~\ref{lem:capacity-large-degree}.
\end{proof}

\section{The discharging argument}\label{sec:discharging}

Let $n_2$ be the number of degree-$2$ vertices of $G$.  By
Lemma~\ref{lem:not-cycle}, every such vertex belongs to a unique thread, and
therefore
\[
2n_2=\sum_{\deg(v)\ge3} w(v).
\]
On the other hand,
\[
2|E(G)|=2n_2+\sum_{d\ge3}d\,n_d,
\qquad
|V(G)|=n_2+\sum_{d\ge3}n_d,
\]
where $n_d$ denotes the number of vertices of degree $d$.  Since
$\operatorname{mad}(G)<9/4$, in particular $\operatorname{ad}(G)<9/4$, and
hence
\[
n_2>\sum_{d\ge3}(4d-9)n_d.
\]
Multiplying by $2$ and using the thread identity gives
\[
\sum_{\deg(v)\ge3}\varepsilon(v)>0.\tag{1}\label{eq:positive-total-excess}
\]

We now redistribute this excess without changing its total.  By
Lemma~\ref{lem:positive-type-reduction}, the only vertices with positive
initial excess are $(3,3,4)$-vertices, and each of them has excess exactly
$1$.  Every such vertex sends one unit of charge along its unique $4$-thread
to the relative at the other end.  After this transfer its final excess is
$0$.

Consider a vertex $u$ receiving charge.  If $\deg(u)=3$, then
Lemma~\ref{lem:334-degree3-support} gives
\[
\operatorname{type}(u)\in
\{(1,1,4),(1,2,4),(1,3,4)\}.
\]
These three types have initial excess $-3,-2,-1$, respectively, and each has
exactly one incident $4$-thread.  Hence $u$ receives at most one unit and its
final excess remains nonpositive.

If $\deg(u)\ge4$, then the number of units received by $u$ is at most
$q_4(u)$.  By Corollary~\ref{cor:higher-capacity},
\[
q_4(u)\le-\varepsilon(u),
\]
so again the final excess of $u$ is nonpositive.  Vertices receiving no
charge were already nonpositive, except for the $(3,3,4)$-vertices, whose
positive unit has been sent away.

Thus every vertex has nonpositive final excess.  Since the redistribution
preserves the total sum,
\[
\sum_{\deg(v)\ge3}\varepsilon(v)\le0,
\]
contradicting \eqref{eq:positive-total-excess}.  Therefore the minimal
counterexample $G$ cannot exist, and Theorem~\ref{thm:main} follows.

\section{Computer-assisted verification}\label{sec:computer}

This section specifies exactly which assertions are checked by computer and how those checks are performed.  The intent is that a reader should be able to reconstruct the computation from the mathematical description even without reading the implementation.

All finite computations are performed directly in the explicitly defined
$126$-vertex graph $K(9,4)$.  A color is stored as a $9$-bit mask with four
set bits.  Adjacency is disjointness, and $A_i(X)$ is obtained by exact
repeated propagation of the adjacency relation.  No random sampling is used.

\subsection{Exact admissible sets}

The exact verifier obtains
\[
(|F_1|,\dots,|F_8|)=(121,105,81,45,21,5,1,0),
\]
and also checks $A_9(X)=V(K(9,4))$.  Together with insertion of backtracking
pairs, this proves Proposition~\ref{prop:AiFi-values} for all lengths used in
the proof.

\subsection{Exact rooted-star test}

Fix a rooted type $(a_1,\dots,a_d)$ with root length $a_1$ and, by
vertex-transitivity, fix the root-relative color $X$.  The possible colors of
the deleted center through the root branch form
\[
C=A_{a_1}(X).
\]
For a non-root branch of length $a_i$, the replacement path requires its
endpoint color $Y_i$ to lie in $A_{a_1+a_i}(X)$.  This endpoint forbids from
$C$ precisely
\[
F_{a_i}(Y_i)\cap C.
\]
Hence the rooted star fails to extend exactly when one may choose one allowed
endpoint on every non-root branch so that these forbidden subsets cover
$C$.

The primary verifier, \path{anc/cpp/verify_revised_reductions.cpp}, solves this
as an exact finite set-cover problem using two $64$-bit words for subsets of
the $126$ colors.  Dominated masks are discarded and the remaining choices
are checked exhaustively by meet-in-the-middle.  Thus the result is purely
combinatorial and does not depend on numerical optimization tolerances.

For positive local excess, the exact output is
\[
\begin{array}{c|r|r|r|l}
d&\text{positive}&\text{union bound}&\text{star tested}&\text{survivors}\\
\hline
3&61&29&32&(3,3,4)\\
4&64&43&21&\emptyset\\
5&41&38&3&\emptyset\\
6&19&19&0&\emptyset\\
7&7&7&0&\emptyset\\
8&2&2&0&\emptyset
\end{array}
\]
and this proves Lemma~\ref{lem:positive-type-reduction}.

For the capacity test of Lemma~\ref{lem:capacity-reduction}, the numbers of
candidate exceptions in degrees $4,5,6,7$ are respectively
\[
11,\ 10,\ 4,\ 1,
\]
and every candidate is reducible by the rooted-star test.

\subsection{Exact support test for \texorpdfstring{$(3,3,4)$}{(3,3,4)}}

The independent program \path{anc/cpp/verify_334_support.cpp} does not use the
state abstraction.  It enumerates all ordered endpoint pairs and all
nonempty sets $U_{3,3}(X_1,X_2)$ and $U_{a,b}(Y_1,Y_2)$ directly.  For each
$1\le a\le b\le7$ it tests whether an incompatible pair exists through a
thread of length $4$.  The output is exactly
\[
(1,1),\quad(1,2),\quad(1,3),
\]
proving Proposition~\ref{prop:334-m4-compatibility}.  The Python implementation
\path{anc/scripts/verify_revised_reductions.py} independently obtains the same
three pairs by grouping endpoint pairs into orbit states.

\subsection{Verification architecture}

The proof-relevant software is deliberately small. The module
\path{anc/src/k9_4/kneser94.py} generates the $126$ colors and the disjointness
adjacency relation. The module \path{anc/src/k9_4/admissible.py} computes exact
walk-reachability sets $A_i(X)$ and their complements. The Python program
\path{anc/scripts/verify_revised_reductions.py} performs a readable independent
cross-check of the positive-type, capacity, and support audits.  Its rooted-star
subroutine is formulated as a mixed-integer feasibility problem and solved by
SciPy/HiGHS; it is therefore not used as the proof certificate.  The exact C++
verifiers described below are authoritative for the computer-assisted lemmas.

The independent C++ program \path{anc/cpp/verify_revised_reductions.cpp} stores
a subset of the $126$ colors in two $64$-bit words. For a rooted type it
fixes the root-relative color by vertex-transitivity, constructs the exact
family of forbidden masks contributed by every possible non-root endpoint,
and asks whether one mask from each branch can cover all root-admissible
colors. Dominated masks are removed, after which the remaining finite
set-cover instance is checked exhaustively. The second C++ program,
\path{anc/cpp/verify_334_support.cpp}, deliberately uses a different formulation:
it enumerates endpoint pairs and the sets $U_{a,b}$ directly, without the
state abstraction used in the Python code.

In pseudocode, the rooted-star test is:
\begin{enumerate}[label=\arabic*.]
  \item Fix a root color $X$ and compute $C=A_{a_r}(X)$.
  \item For each non-root branch $i$, enumerate all endpoint colors
        $Y\in A_{a_r+a_i}(X)$.
  \item Replace each endpoint $Y$ by the forbidden mask
        $C\cap F_{a_i}(Y)$.
  \item Ask whether one mask from each non-root branch can cover $C$.
  \item If no such cover exists, every coloring of the replacement graph
        extends across the deleted star.
\end{enumerate}
All sets in this procedure are exact finite sets; there are no numerical
tolerances.

\subsection{Scope of the computer assistance}

Only four kinds of statements in the proof depend on exhaustive computation:
\begin{enumerate}[label=(C\arabic*)]
  \item the values of $A_i(X)$ and $F_i(X)$ for the short thread lengths;
  \item the rooted-star extension test used in
        Lemma~\ref{lem:positive-type-reduction};
  \item the rooted-star capacity test used in
        Lemma~\ref{lem:capacity-reduction};
  \item the direct compatibility test in
        Proposition~\ref{prop:334-m4-compatibility}.
\end{enumerate}
The graph-theoretic reductions, preservation of odd girth and maximum
average degree, the excess identity, the large-degree capacity inequality,
and the final charging argument are entirely handwritten.

\subsection{Reproducibility}

\paragraph{Code availability.}
The proof-relevant source code, exact verification programs, reference outputs,
and reproducibility material are archived on Zenodo as version 1.0.0
\cite{NazarczukCode2026}, DOI
\href{https://doi.org/10.5281/zenodo.22899801}{\nolinkurl{10.5281/zenodo.22899801}}.
The development repository is available on
\href{https://github.com/mar2000/chen-raspaud-k4-computer-verification}{GitHub}.
The \path{anc/} directory included with this arXiv submission contains the
proof-relevant code and reference outputs, so all computer-assisted claims can
also be checked without accessing an external service.

The exact checks can be reproduced with
\begin{verbatim}
g++ -std=c++20 -O2 anc/cpp/verify_revised_reductions.cpp -o verify_star
./verify_star

g++ -std=c++20 -O2 anc/cpp/verify_334_support.cpp -o verify_334
./verify_334
\end{verbatim}
The expected terminal lines are
\begin{verbatim}
ALL EXACT C++ STAR CHECKS PASSED
ALL EXACT C++ 334 SUPPORT CHECKS PASSED
\end{verbatim}
As a non-authoritative independent cross-check, the Python verification is run with
\begin{verbatim}
PYTHONPATH=anc/src python anc/scripts/verify_revised_reductions.py
\end{verbatim}
and ends with
\begin{verbatim}
ALL REVISED REDUCTION CHECKS PASSED
\end{verbatim}

\section{Concluding remarks}

The proof isolates a relatively small amount of computation from the global
argument. Once the exact local extension tests are available, the final
structure is simple: every positive type is reducible except $(3,3,4)$, and
the only remaining positive unit can be absorbed through its unique
$4$-thread. In particular, the computer is not used to search for a charging
rule; it certifies the finite local statements that make the one-unit rule
valid.

The rooted-star formulation may also be useful independently of the present
parameter. It converts a local extension problem into a finite covering
question inside the target graph, separating graph-theoretic preservation
lemmas from target-specific computation. Whether this point of view can be
made sufficiently uniform to be useful for other Kneser targets is a natural
methodological question, but no such generalization is needed for
Theorem~\ref{thm:main}.

\appendix

\section{Finite certificates}\label{app:certificates}

This appendix records the finite data used by the two nontrivial computer
checks. The tables are generated from the same outputs as the accompanying
verifiers and are included so that the size and shape of the exhaustive
search are visible directly in the manuscript.

\subsection{Capacity exceptions}

For $4\le d\le7$, Lemma~\ref{lem:capacity-reduction} needs to check only
nonpositive types with $q_4>-\varepsilon$. There are exactly $26$ such
types. Every row below is certified reducible by the rooted-star verifier.

\begin{longtable}{c|l|r|r}
\caption{All finite capacity exceptions.}\label{tab:capacity-exceptions}\\
\toprule
$d$ & type $\tau$ & $\varepsilon(\tau)$ & $q_4(\tau)$\\
\midrule
\endfirsthead
\toprule
$d$ & type $\tau$ & $\varepsilon(\tau)$ & $q_4(\tau)$\\
\midrule
\endhead
4 & $(1,4,6,7)$ & 0 & 1\\
4 & $(2,4,4,7)$ & $-1$ & 2\\
4 & $(2,4,5,7)$ & 0 & 1\\
4 & $(2,4,6,6)$ & 0 & 1\\
4 & $(3,4,4,6)$ & $-1$ & 2\\
4 & $(3,4,4,7)$ & 0 & 2\\
4 & $(3,4,5,6)$ & 0 & 1\\
4 & $(4,4,4,4)$ & $-2$ & 4\\
4 & $(4,4,4,5)$ & $-1$ & 3\\
4 & $(4,4,4,6)$ & 0 & 3\\
4 & $(4,4,5,5)$ & 0 & 2\\
\midrule
5 & $(2,4,7,7,7)$ & 0 & 1\\
5 & $(3,4,6,7,7)$ & 0 & 1\\
5 & $(4,4,4,6,7)$ & $-2$ & 3\\
5 & $(4,4,4,7,7)$ & $-1$ & 3\\
5 & $(4,4,5,6,7)$ & $-1$ & 2\\
5 & $(4,4,5,7,7)$ & 0 & 2\\
5 & $(4,4,6,6,6)$ & $-1$ & 2\\
5 & $(4,4,6,6,7)$ & 0 & 2\\
5 & $(4,5,5,6,7)$ & 0 & 1\\
5 & $(4,5,6,6,6)$ & 0 & 1\\
\midrule
6 & $(4,4,6,7,7,7)$ & $-1$ & 2\\
6 & $(4,4,7,7,7,7)$ & 0 & 2\\
6 & $(4,5,6,7,7,7)$ & 0 & 1\\
6 & $(4,6,6,6,7,7)$ & 0 & 1\\
\midrule
7 & $(4,6,7,7,7,7,7)$ & 0 & 1\\
\bottomrule
\end{longtable}

\subsection{Direct support audit for \texorpdfstring{$(3,3,4)$}{(3,3,4)}}

The program \path{anc/cpp/verify_334_support.cpp} enumerates every pair
$1\le a\le b\le7$. The middle column below is the number of distinct
nonempty sets $U_{a,b}(Y_1,Y_2)$ encountered over all ordered endpoint
pairs. The last column records whether there exists a realization
incompatible with a realizable $U_{3,3}$ across a $4$-thread.

\begin{longtable}{c|r|c}
\caption{Direct compatibility certificate through a $4$-thread.}\label{tab:334-certificate}\\
\toprule
$(a,b)$ & distinct nonempty $U_{a,b}$ sets & incompatible realization?\\
\midrule
\endfirsthead
\toprule
$(a,b)$ & distinct nonempty $U_{a,b}$ sets & incompatible realization?\\
\midrule
\endhead
$(1,1)$ & 252 & yes\\
$(1,2)$ & 1386 & yes\\
$(1,3)$ & 1386 & yes\\
$(1,4)$ & 1386 & no\\
$(1,5)$ & 1386 & no\\
$(1,6)$ & 756 & no\\
$(1,7)$ & 756 & no\\
$(2,2)$ & 5166 & no\\
$(2,3)$ & 12726 & no\\
$(2,4)$ & 12726 & no\\
$(2,5)$ & 10206 & no\\
$(2,6)$ & 5796 & no\\
$(2,7)$ & 2772 & no\\
$(3,3)$ & 7686 & no\\
$(3,4)$ & 15246 & no\\
$(3,5)$ & 13356 & no\\
$(3,6)$ & 10332 & no\\
$(3,7)$ & 5796 & no\\
$(4,4)$ & 8001 & no\\
$(4,5)$ & 15372 & no\\
$(4,6)$ & 13356 & no\\
$(4,7)$ & 10332 & no\\
$(5,5)$ & 8001 & no\\
$(5,6)$ & 15372 & no\\
$(5,7)$ & 13356 & no\\
$(6,6)$ & 8001 & no\\
$(6,7)$ & 15372 & no\\
$(7,7)$ & 8001 & no\\
\bottomrule
\end{longtable}

Thus the only incompatible shorter-length pairs are exactly
$(1,1),(1,2),(1,3)$, as asserted in
Proposition~\ref{prop:334-m4-compatibility}.

\section{Source-code map and reproducibility}\label{app:code}

The following files are sufficient to reproduce every computer-assisted
claim used in the final proof.

\begin{longtable}{p{0.34\textwidth}|p{0.58\textwidth}}
\caption{Proof-relevant source files.}\label{tab:source-map}\\
\toprule
file & purpose\\
\midrule
\endfirsthead
\toprule
file & purpose\\
\midrule
\endhead
\path{anc/src/k9_4/kneser94.py} & construction of the $126$ colors, disjointness adjacency, and basic bitmask helpers\\
\path{anc/src/k9_4/admissible.py} & exact walk-reachability sets $A_i(X)$ and forbidden sets $F_i(X)$\\
\path{anc/scripts/verify_revised_reductions.py} & readable Python/SciPy cross-check of the positive-type audit, capacity audit, and degree-$3$ support pairs; not used as the exact proof certificate\\
\path{anc/cpp/verify_revised_reductions.cpp} & independent exact bitset verifier for admissible sets, positive types, and capacity exceptions\\
\path{anc/cpp/verify_334_support.cpp} & independent direct enumeration for the $(3,3,4)$ support proposition\\
\path{anc/run_all_checks.sh} & convenience script compiling and running the exact verifiers\\
\bottomrule
\end{longtable}

A minimal reproduction consists of a C++20 compiler and Python~3. The C++
checks are run with
\begin{verbatim}
g++ -std=c++20 -O2 anc/cpp/verify_revised_reductions.cpp -o verify_star
./verify_star

g++ -std=c++20 -O2 anc/cpp/verify_334_support.cpp -o verify_334
./verify_334
\end{verbatim}
and the expected final lines are
\begin{verbatim}
ALL EXACT C++ STAR CHECKS PASSED
ALL EXACT C++ 334 SUPPORT CHECKS PASSED
\end{verbatim}
The independent Python cross-check is
\begin{verbatim}
PYTHONPATH=anc/src python anc/scripts/verify_revised_reductions.py
\end{verbatim}
and terminates with
\begin{verbatim}
ALL REVISED REDUCTION CHECKS PASSED
\end{verbatim}
The raw outputs supplied with the manuscript contain the complete candidate
counts and certificate tables appearing above.

\section*{Chronology and independent development}
The mathematical research leading to the present proof was carried out through
the end of 2025. The proof was written up in April 2026. While the manuscript
was being prepared for public release, Choi independently published a proof of
the same $k=4$ case~\cite{Choi2026}; the article was first published online on
29 July 2026. As noted in the Introduction, the two proofs share the broad
minimal-counterexample, thread-reduction, and discharging framework, but the
local arguments are different: Choi's proof is computer-free, while the present
proof uses the finite geometry of $K(9,4)$, the rooted-star replacement, and
exact exhaustive verification of the remaining local extension statements. The present proof was developed independently and does not rely on any
result or argument from Choi's paper. The citation is included to record the
chronology and to place the present independently developed argument in the
context of the subsequently published result.

\end{document}